\documentclass[a4paper,12pt]{article}

\synctex=1

\usepackage{amsfonts,amsthm,amssymb,amsmath}
\usepackage{hyperref}
\usepackage{fourier}
\usepackage[utf8]{inputenc}
\usepackage{color}

\newtheorem{theorem}{Theorem}[section]

\newtheorem{corollary}[theorem]{Corollary}
\newtheorem{lemma}[theorem]{Lemma}
\theoremstyle{definition}

\newtheorem{remark}[theorem]{Remark}

\newcommand{\N}{\mathbb{N}}					
\newcommand{\T}{\mathbb{T}}

\usepackage{mathtools}

\DeclarePairedDelimiter{\abs}{\lvert}{\rvert}
\DeclarePairedDelimiter{\norm}{\lVert}{\rVert}

\DeclareMathOperator{\mon}{mon}

\newcommand{\mmn}{\mathcal M(m,n)}

\newcommand{\jmn}{\mathcal J(m,n)}

\newcommand{\jbn}[1]{\mathcal J(#1,n)}
\newcommand{\bi}{\mathbf i}
 \newcommand{\bj}{\mathbf j}

\newcommand{\veps}{\varepsilon}
\newcommand{\chimon}{\chi_{\mon}}
\newcommand{\pjlr}{\mathcal P(^{J}\ell_r)}
\newcommand{\fullpjlr}{\mathcal P(^{\mathcal J(m,n)}\ell_r)}

\title{Monomial convergence for holomorphic functions
on $\ell_r$}
\author{
Fr\'ed\'eric Bayart \footnote{Laboratoire de Math\'ematiques
Universit\'e Blaise Pascal Campus des C\'ezeaux, F-63177 Aubi\`ere Cedex (France)
}
\and Andreas Defant\footnote{Institut f\"ur Mathematik. Universit\"at Oldenburg. D-26111 Oldenburg (Germany) }
\and Sunke Schl\"uters\footnote{Institut f\"ur Mathematik. Universit\"at Oldenburg. D-26111 Oldenburg (Germany) }
}
\date{}

\begin{document}
\maketitle

\begin{abstract}
	\noindent
Let $\mathcal F$ be either the set of all bounded holomorphic functions or the set of all $m$-homogeneous polynomials on the unit ball of $\ell_r$. We give a systematic study of the sets of all $u\in\ell_r$ for which the monomial expansion $\sum_{\alpha}\frac{\partial^\alpha f(0)}{\alpha !}u^\alpha$ of every $f\in\mathcal F$ converges. Inspired by recent results from the general theory of Dirichlet series, we establish as our main tool,  independently  interesting, upper estimates for the unconditional basis constants of spaces of polynomials on $\ell_r$ spanned by finite sets of monomials.
\end{abstract}



\section{Introduction}
Let $X$ be a Banach sequence space (i.e., $\ell_1 \subset X \subset c_0$ such that the canonical sequences $(e_k)$ form a $1$-unconditional basis) and $R \subset X$ a Reinhardt domain (i.e., a nonempty open set
such that any complex sequence $u$ belongs to $R$  whenever  there exists $z\in R$ with $|u|\leq |z|$; for instance, the open unit ball $B_X$ of $X$). Then each holomorphic (i.e., Fr\'{e}chet differentiable)  function $f:R \rightarrow \mathbb{C}$ has a power series expansion $\sum_{\alpha\in\mathbb N_0^n}c_\alpha^{(n)}z^\alpha$ on every finite dimensional section $R_n$ of $R$, and for example from the Cauchy formula we can
 see that $c_\alpha^{(n)}=c_\alpha^{(n+1)}$ for $\alpha\in\mathbb N_0^n\subset \mathbb N_0^{n+1}$. Thus there is a unique family $(c_\alpha(f))_{\alpha\in\mathbb N_0^{(\mathbb N)}}$ such that,
 for  all $n\in\mathbb N$ and all $z\in R_n$,
$$f(z)=\sum_{\alpha\in\mathbb N_0^{(\mathbb N)}}c_\alpha z^\alpha.$$
The power series $\sum_\alpha c_\alpha z^\alpha$ is called the monomial expansion of $f$, and $c_\alpha=c_\alpha(f)$ are its monomial coefficients.

Contrary to what happens on finite dimensional domains, the monomial expansion of $f$ does not necessarily converge at every point of $R$.
This in \cite{DeMaPr09} motivated the introduction of the following definition: Given a subset $\mathcal F(R)$  of $H(R)$, the set of all holomorphic functions on $R$, we call
\begin{equation*}
    \mon \mathcal F(R)=\Bigg\{z\in R\,\,\,  : \,\,\, \sum_{\alpha\in\mathbb N_0^{(\mathbb N)} } \big|c_\alpha(f) z^\alpha \big| < \infty \,\, \text{ for all } f \in \mathcal F(R)  \Bigg\}
\end{equation*}
the domain of monomial convergence with respect to $\mathcal F(R)$.

By continuity of a holomorphic function, and since the equality is satisfied on $R_n$, we know that for all $z\in\mon\mathcal F(R)$,
$$f(z)= \sum_{\alpha\in\mathbb N_0^{(\mathbb N)} } c_\alpha(f) z^\alpha.$$
We are mostly interested in determining $\mon \mathcal F(R)$ when $\mathcal F(R)=\mathcal P(^m\ell_r)$ or $ H_\infty(B_{\ell_r})$ for $1 \leq r \leq \infty$; as usual we denote by $H_\infty(B_X)$ the Banach space of all bounded holomorphic functions $f: B_X \rightarrow \mathbb{C}$, and by $\mathcal{P}(^mX)$ its closed subspace of all $m$-homogeneous polynomials $P$ (i.e., all restrictions of bounded $m$-linear forms on $X^m$ to their diagonals).

The case $r=1$ was solved completely by Lempert in \cite{Le99}, and the case $r=\infty$ seems fairly well-understood through the results of \cite{BaDeFrMaSe14} (for more on these results see the introductions of  the sections \ref{polynomials}  and \ref{holomorphic functions}). However, for $1< r<\infty$, despite the results of \cite{DeMaPr09}, the description of $\mon\mathcal P(^m\ell_r)$ and $\mon H_\infty(B_{\ell_r})$ remains mysterious.  In this paper, we improve the  knowledge on these cases.

 Of course, for $X=\ell_1$   the fact that each sequence in $\ell_1$ by definition is absolutely summable is a big
 advantage, and for $X=\ell_\infty$ the crucial  tool is the  Bohnenblust-Hille inequality (an inequality for $m$-linear forms on $\ell_\infty$) together with all its recent improvments.
 But for
$X=\ell_r$ with $r \neq 1, \infty$ we need alternative techniques.

The problem is to find for each  $u \in B_{\ell_r}$ an additional {\it summability condition} which guaranties full control of all sums
$
\sum_{\alpha } \left|c_\alpha(f) u^\alpha\right|\,,\, f \in H_\infty(B_{\ell_r})\,.
$
The general idea is simple. Split   the  set $\mathbb N_0^{(\mathbb N)}$  of all multi indices $\alpha$ into  a union of finite sets $\Lambda_n$, and  then  each $\Lambda_n$  into  the disjoint union of all its $m$-homogeneous parts $\Lambda_{n,m}$ (i.e.,
 all $\alpha \in \Lambda_n$ with  order $|\alpha|=m$). The  challenge now is as follows: Find a clever decomposition
 \begin{equation} \label{deco}
\mathbb N_0^{(\mathbb N)} = \bigcup_{m,n} \Lambda_{m,n}\,,
\end{equation}
  which allows   a in a sense  uniform control over all possible partial sums
 \begin{equation} \label{smallsums}
\sum_{\alpha \in \Lambda_{n,m}} \left|c_\alpha(f) u^\alpha\right|\,\,, \quad f \in H_\infty(B_{\ell_r})\,,
\end{equation}
such that under the additional  summability property of $u\in B_{\ell_r}$ we for all functions $f$ finally can conclude that
\[
\sum_{\alpha \in \mathbb N_0^{(\mathbb N)} } \left|c_\alpha(f) u^\alpha\right|
\leq
\sum_n\,\, \sum_{m} \,\sum_{\alpha \in \Lambda_{n,m}} \left|c_\alpha(f) u^\alpha\right| < \infty\,.
\]
In order to study domains $ \mon \mathcal F(R)$ of monomial convergence, the  decomposition  in \eqref{deco} which for our purposes is crucial, is inspired by the work of  Konyagin and Queff\'elec  from \cite{KONQUEFF} on  Dirichlet series (see \ref{beginning}),
and it is
 based  on the fundamental theorem of arithmetics.
In order to handle \eqref{smallsums}, we study  for arbitrary finite index sets $\Lambda$ of multi indices
 upper  bounds
of the unconditional basis constant of the subspace in $\mathcal{P}(^m\ell_r)$  spanned by  all monomials
$z^\alpha\,, \alpha \in \Lambda$.
 Two   tools of seemingly independent interest are established.
The first one
is a fairly general  upper  estimate whenever all $\alpha \in \Lambda$ are $m$-homo\-geneous (i.e., $|\alpha| =m$) (Theorem \ref{THMMONOMIAL}).
The  second one leads to such estimates for certain sets $\Lambda$ of  nonhomogeneous $\alpha's$, needed  to apply the above
technique of Konyagin and Queffélec (Theorem~\ref{unc-basis-q} and \ref{unc-basis-pol}).
  Finally, we present  our  new results  on  sets of monomial convergence
for homogeneous   polynomials and bounded  holomorphic functions on $\ell_r$ (for polynomials see part (3),(4) of  Theorem
\ref{zero} and  Theorem \ref{LEMPRESQUEPOLY}, and for holomorphic functions Theorem \ref{thm:psigma} with its corollaries \ref{CORHOL1} and \ref{CORHOL}).

\section{Preliminaries}
We use standard notation from Banach space theory. As usual,  we denote the conjugate exponent of $1 \le r \le \infty$ by $r'$, i.e. $\tfrac{1}{r} + \tfrac{1}{r'} = 1$. Given $m,n \in \mathbb{N}$ we consider the following sets of indices
\begin{align*}
    \mathcal{M} (m,n)
        &=  \big
                \{\mathbf{j} = (j_{1}, \dots , j_{m})
            \,;\,
                1 \leq j_{1}, \dots , j_{m} \leq n
            \big\}
            = \{1, \ldots , n \}^{m} \\
    \mathcal M(m)
        &= \mathbb N^m \\
    \mathcal M
        &= \mathbb N^\mathbb{N}
\intertext{and}
    \mathcal{J} (m,n)
        &= \big\{
                \mathbf{j} \in \mathcal{M} (m,n)
            \,;\,
                1 \leq j_{1} \leq \dots \leq j_{m} \leq n
            \big\} \\
    \mathcal{J}(m)
        &= \bigcup_{n}\mathcal J(m,n) \\
    \mathcal{J}
        &=\bigcup_{m}\mathcal J(m).
\end{align*}
For indices $\bi,\bj \in \mathcal{M}$ we denote by $(\bi,\bj) = (i_1, i_2, \dotsc, j_1, j_2, \dotsc)$ the concatenation of $\bi$ and $\bj$. An equivalence relation is defined in $\mathcal{M} (m)$ as follows: $\mathbf{i} \sim \mathbf{j}$
if there is a permutation $\sigma$ such that $i_{\sigma
(k)} = j_{k}$
for all $k$.
We write $|\mathbf{i}|$ for the cardinality of the equivalence class
$[\mathbf{i}]$.
Moreover, we note that for each $\mathbf{i} \in \mathcal{M} (m)$ there is a unique $\mathbf{j} \in \mathcal{J} (m)$
such that $\mathbf{i} \sim \mathbf{j}$.

Let us compare our index notation with the  multi index notation usually used in the context of polynomials.
There is a one-to-one relation between  $\mathcal{J} (m)$
and
\[
\Lambda (m) = \left\{ \alpha \in \mathbb{N}_{0}^{(\mathbb N)}  \,\,;\,\, \vert \alpha \vert = \sum_{i=1}^{\infty}\alpha_i =m\right \}\,;
\]
indeed, given $\mathbf{j}$,
one can define $\alpha$ by doing $\alpha_{r} = | \{ q \,|\, j_{q}=r \}|$;
conversely, for each $\alpha$, we consider
$\mathbf{j}_{\alpha} = (1, \stackrel{\alpha_{1}}{\dots} , 1, 2,\stackrel{\alpha_{2}}{\dots} ,2 ,
\dots , n ,\stackrel{\alpha_{n}}{\dots} ,n,\dots)$. In the same way we may identify
$\Lambda (m,n) = \left\{ \alpha \in \mathbb{N}_{0}^{n}  \,\,;\,\, \vert \alpha \vert = m\right \}$
with $\mathcal{J} (m,n)$.
Note that $|\mathbf{j}_{\alpha}| = \frac{m!}{\alpha !}$ for every $\alpha \in \Lambda (m)$.
Taking this correspondence into account, for every Banach sequence space $X$ the monomial series expansion of a $m$-homogeneous polynomial $P \in \mathcal{P} (^{m} X)$
can be expressed in
different ways
(we write $c_{\alpha} = c_{\alpha}(P)$)
\begin{equation*} \label{polin varios}
  \sum_{\alpha \in \Lambda(m)} c_{\alpha} z^{\alpha} = \sum_{\mathbf{j} \in \mathcal{J}(m)} c_{\mathbf{j}} z_{\mathbf{j}}
= \sum_{1 \leq j_{1} \leq \ldots \leq j_{m}} c_{j_{1} \ldots j_{m}} z_{j_{1}} \cdots z_{j_{m}} \, .
\end{equation*}
Given a Banach sequence space $X$ and some index subset   $J \subset \mathcal{J}$, we write $\mathcal P(^J X)$ for the closed subspace of all  holomorphic functions $f \in H_\infty(B_X)$ for which $c_\bj(f) =0$ for all $\bj \in \mathcal{J} \setminus J$. Clearly, $\mathcal P(^m X)= \mathcal P(^{\mathcal{J}(m)} X)$. If $J \subset \mathcal{J}$ is finite, then
\begin{equation*}
    \mathcal P(^J X)=\textrm{span}\Big \{z_\bj\,: \,\ \bj\in J\Big\}\,,
\end{equation*}
where  $z_\bj$ for $ \bj=(j_1, \ldots, j_\ell)$ stands for the monomial $z_\bj: u \mapsto u_\bj:=u_{j_1}\cdot \ldots \cdot u_{j_\ell}$. For $J\subset\mathcal J(m)$, we call
\begin{equation*}
    J^*=\big\{\bj \in\mathcal J(m-1);\ \exists k\geq 1,\ (\bj,k)\in J\big\}
\end{equation*}
the reduced set of $J$.

\section{Unconditionality} \label{uncon}
Given a compact group $G$, the Sidon constant of a finite set $\mathcal{C}$ of characters $\gamma$ (in
the dual group)  is the best constant $c \ge 0$, denoted by $S(\mathcal{C})$, such that for every choice of scalars $c_\gamma,
 \gamma  \in \mathcal{C}$, we have that
\[
\sum_{\gamma  \in \mathcal{C}}  |c_\gamma| \leq \,c\,\Big\| \sum_{\gamma \in \mathcal{C}}  c_\gamma \gamma\Big\|_\infty\,.
\]
An immediate consequence of the Cauchy-Schwarz inequality is  that
\[
1 \leq S(\mathcal{C}) \leq |\mathcal{C}|^{\frac{1}{2}}\,.
\]
For the  circle groups $G=\mathbb{T}, \mathbb{T}^n$ and $\mathbb{T}^\infty$ different values are possible:
\begin{itemize}
\item
A well-known result of Rudin shows that for the set $\mathcal C=\{1,z,\ldots,z^{n-1}\}$ of characters on $G=\mathbb{T}$
we have, up to constants independent of $n$,
\begin{equation} \label{num1}
S(\mathcal{ C}) \asymp \sqrt{n}\,.
\end{equation}
\item
In \cite{DFOOS} it was proved  that
for every $m,n$ the Sidon constant of the monomials $\mathcal C=\{z^\alpha : \alpha \in  \Lambda(m,n)\}$  on
$G=\mathbb{T}^n$,  up to the $m$th power $C^m$ of some absolute constant $C$, satisfies
\begin{equation} \label{num2}
S(\mathcal{ C}) \asymp \left|\Lambda(m-1,n)\right|^{\frac{1}{2}}\,.
\end{equation}
\item
In contrast, a reformulation of a result of Aron and Globevnik \cite[Thm 1.3]{AronGlobevnik} shows that
for every $m$ the Sidon constant of the sparse set   $\mathcal C=\{z_j^m : j \in \mathbb{N}\}$
fulfills
\begin{equation} \label{num3}
S(\mathcal{ C}) =1 \,.
\end{equation}
\end{itemize}
Let us transfer some of these results into terms of unconditional bases constants of spaces polynomials on sequence spaces.
Recall that a Schauder basis $(x_n)$ of a Banach space $X$ is said to be unconditional whenever there is a constant $c\geq 0$
such that
    $\left\| \sum_{k} \veps_k \alpha_k x_k \right\|
        \leq c \left\| \sum_k \alpha_k x_k\right\|$
for every $x = \sum_k a_k x_k \in X$ and all choices of $( \veps_k)_k \subset \mathbb C$ with $|\veps_k|=1$. In this case, the best constant $c$ is denoted by $\chi\big((x_n)\big)$ and called the
unconditional basis constant of $(x_n)$. If such a constant doesn't exist, i.e. if the basis is not unconditional, we set $\chi\big((x_n)\big) = \infty$.

Given a Banach sequence space $X$ and an index set  $J \subset \mathcal{J}$, such that the set $\mathcal{C}=\{z_\bj\,:\, \bj \in J\}$ of all monomials associated with $J$ (counted in a suitable way) forms
an   basis of $\mathcal{P}(^J X)$, we write
\begin{equation*}
    \chimon\big(\mathcal{P}(^J X)\big) = \chi( \mathcal{C} ) \,.
\end{equation*}
If we interpret each of these monomials $z_\bj$ as a  character on the group $\mathbb{T}^\infty$,
then a straightforward calculation (using the distinguished maximum modulus principle) proves that
$$
S(\mathcal{C}) = \chimon\big(\mathcal{P}(^J \ell_\infty)\big)\,.
$$
A simple but useful lemma shows that  $\chimon\big(\mathcal{P}(^J \ell_\infty)\big)$ is an upper bound of all  $\chimon \big(\mathcal{P}(^J X)\big)$.
\begin{lemma}
    \label{lem:the_trick_poly}
    Let $X$ be a Banach sequence space and let $J\subset\mathcal J$, such that the monomials form a basis of $\mathcal P(^J X)$. Then
    \begin{equation*}
        \chimon\big(\mathcal P(^J X)\big)\leq \chimon\big(\mathcal P(^J \ell_\infty)\big).
    \end{equation*}
\end{lemma}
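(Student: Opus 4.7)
My plan is to exploit the $1$-unconditionality of the canonical basis in $X$ via a ``twisting trick'' that transfers the problem from $\mathcal P(^J X)$ to $\mathcal P(^J \ell_\infty)$, where the upper bound on the unconditionality constant is assumed.

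The crucial observation is this: because $(e_k)$ is $1$-unconditional in $X$, for every $u\in B_X$ and every $w\in\overline{\mathbb D}^{\mathbb N}\subset B_{\ell_\infty}$ the coordinate-wise product $w\cdot u=(w_k u_k)_k$ again lies in $B_X$ with $\|w\cdot u\|_X\le\|u\|_X$. Fix $P=\sum_{\bj\in J}c_\bj z_\bj\in\mathcal P(^J X)$ with finite partial sums converging in norm (which is legitimate since the monomials form a basis). For each $u\in B_X$ I would define
\[
R_u(w):=P(w\cdot u)=\sum_{\bj\in J} c_\bj\, u_\bj\, w_\bj\,,\qquad w\in B_{\ell_\infty}.
\]
Using the above observation applied to $P$ (and its partial sums), $R_u$ is a well-defined element of $\mathcal P(^J\ell_\infty)$ whose monomial coefficients are precisely $c_\bj u_\bj$, with $\|R_u\|_{\mathcal P(^J\ell_\infty)}\le\|P\|_{\mathcal P(^J X)}$.

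Now, for any choice of signs $(\veps_\bj)_{\bj\in J}\subset\mathbb T$, the unconditionality of the monomial basis in $\mathcal P(^J\ell_\infty)$ gives
\[
\Big\|\sum_{\bj\in J}\veps_\bj c_\bj u_\bj w_\bj\Big\|_{\mathcal P(^J\ell_\infty)}
\le
\chimon\big(\mathcal P(^J\ell_\infty)\big)\,\|R_u\|_{\mathcal P(^J\ell_\infty)}
\le
\chimon\big(\mathcal P(^J\ell_\infty)\big)\,\|P\|_{\mathcal P(^J X)}.
\]
Evaluating the left-hand side at $w=(1,1,\ldots)\in\mathbb T^{\mathbb N}$ yields
\[
\Big|\sum_{\bj\in J}\veps_\bj c_\bj u_\bj\Big|\le \chimon\big(\mathcal P(^J\ell_\infty)\big)\,\|P\|_{\mathcal P(^J X)},
\]
and since this holds for every $u\in B_X$, taking the supremum gives the desired bound for the twisted polynomial $\sum\veps_\bj c_\bj z_\bj$ in $\mathcal P(^J X)$.

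I do not expect a serious obstacle here; the entire argument hinges on recognizing the twisting $P\mapsto R_u$ and the fact that $1$-unconditionality of the canonical basis of $X$ makes $B_X$ invariant under pointwise multiplication by $B_{\ell_\infty}$. The only mildly delicate point is justifying that $R_u\in\mathcal P(^J\ell_\infty)$ with the claimed coefficients when $J$ is infinite, which follows by approximating $P$ by finite partial sums of its monomial expansion and using continuity.
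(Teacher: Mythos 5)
Your proposal is correct and is essentially the same argument as the paper's: both hinge on the observation that, because $B_X$ is a Reinhardt domain, the twist $P \mapsto Q(w) = P(w\cdot u)$ sends $\mathcal P(^J X)$ into $\mathcal P(^J \ell_\infty)$ with $\|Q\|_\infty \le \|P\|_\infty$ and monomial coefficients $c_\bj(P) u_\bj$, so unconditionality transfers. The only cosmetic difference is at the end: you fix signs, apply unconditionality in $\mathcal P(^J\ell_\infty)$, and evaluate at the boundary point $w=(1,1,\ldots)$ (which for infinite $J$ requires the finite-partial-sum approximation you flag), whereas the paper sidesteps the boundary by writing $\sum_\bj|c_\bj(P)u_\bj| = \sup_{w\in B_{\ell_\infty}}\sum_\bj|c_\bj(Q)||w_\bj|$ and bounding that supremum directly.
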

\begin{proof}
    Assume $\chimon\big( \mathcal P(^J \ell_\infty) \big) < \infty$ (otherwise there is nothing to show). For $P\in \mathcal P(^J X)$ and a fixed  $u\in B_X$ define $Q(w)=P(wu)\in\mathcal P(^J \ell_\infty)$.
    Since $B_X$ is a Reinhardt domain, we have
    $\|Q\|_\infty\leq \|P\|_\infty$. It is now sufficient to observe that
    \begin{align*}
        \sum_{\bj\in J} |c_\bj(P) u_\bj |
        &=
        \sup_{w\in B_{\ell_\infty}}\sum_{\bj\in J}|c_\bj(P) u_{\bj}||w_{\bj}|
        = \sup_{w\in B_{\ell_\infty}}\sum_{\bj\in J}|c_\bj(Q)||w_{\bj}|
        \\
        &\leq \chimon\big(\mathcal P(^J \ell_\infty)\big) \|Q\|_\infty
        \leq \chimon\big(\mathcal P(^J \ell_\infty)\big) \|P\|_\infty\,,
    \end{align*}
    the conclusion.
\end{proof}

\noindent
Let us again see some examples:
Given $X$, an immediate consequence of \eqref{num1} is that for $\mathcal P(^J X\big) = \operatorname{span} \{ z_1^j \,;\, 0 \leq j \leq n-1 \}$ we have, up to a universal constant,
\begin{equation*}
    \chimon\big(\mathcal P(^J X)\big) \asymp \sqrt{n},
\end{equation*}
and from  \eqref{num3} we may deduce that for $J=\big\{ (k, \dotsb, k) \,;\, k\in\mathbb N \big\} \subset \mathcal{J}(m) $
\[
\chimon\big(\mathcal P(^J X)\big) =1.
\]
Generalizing \eqref{num2}  is much more complicated. In the scale of  all $\ell_r$-spaces
the results from \cite{BAYMAXMOD} (lower estimates) and \cite{ DeFr11, DFOOS} (upper estimates) show that for $1 \leq r \leq \infty$
\begin{equation}
    \label{start}
    \chimon\big(\fullpjlr\big)   \asymp \big|\mathcal{J}(m-1,n)\big|^{1-\frac{1}{\min{(r,2)}}}\,,
\end{equation}
where $\asymp$ means that the left and the right side equal up to the $m$-th power $C^m$ of a constant only depending on $r$ (and neither on $m$ nor on $n$).

Replacing the index set $\mathcal{J}(m,n)$ by an arbitrary finite subset $J\subset \mathcal J(m,n)$ the following result is a strong improvement
and our main tool within our later study of sets  of monomial convergence.

\begin{theorem}\label{THMMONOMIAL}
    Given  $1 \le r \le \infty$ and $m \ge 1$, there is a constant $C(m,r) \ge 1$ such that for every $n\geq 1$, every $P \in \mathcal{P}(^{\mathcal{J}(m,n)} \ell_r)$,  every
     $J\subset \mathcal J(m,n)$, and every
    $u \in \ell_r$ we have
    \begin{equation} \label{REMMONOMIAL}
        \sum_{\bj\in J} \left|c_{\bj}(P)\right||u_\bj|\leq C(m,r) |J^*|^{1-\frac 1{\min(r,2)}}\|u\|_r^m\|P\|_\infty\,,
    \end{equation}
    where
    \begin{equation*}
        C(m,r) \leq \begin{cases}
            e me^{(m-1)/r} & \text{if $1 \leq r \leq 2$} \\
            e m2^{(m-1)/2} & \text{if $2\leq r \leq \infty$.}
        \end{cases}
    \end{equation*}
    In particular, for every finite $J \subset \mathcal{J}(m)$
    \begin{equation} \label{REMMONOMIAL1}
        \chimon\big(\pjlr\big)\leq  C(m,r) |J^*|^{1-\frac 1{\min(r,2)}}\,.
    \end{equation}
\end{theorem}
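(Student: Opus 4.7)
The plan is to pass from $P$ to its associated symmetric $m$-linear form $L$ defined by $P(z)=L(z,\ldots,z)$, decompose each $\bj\in J$ as $\bj=(\bi,k)$ with $\bi\in J^*$ and $k\in S_\bi:=\{k:(\bi,k)\in J\}$, estimate the inner $k$-sum by a duality bound on the linear functional $L(e_\bi,\cdot)\in(\ell_r)^*$, and then apply Hölder to the outer sum over $\bi\in J^*$ with an exponent that depends on $r$. Harris's polarization inequality yields $\|L\|\le(m^m/m!)\|P\|_\infty$, and the monomial coefficients take the form $c_\bj(P)=(m!/\alpha_\bj!)\,L(e_{j_1},\ldots,e_{j_m})$.

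Combining the ingredients: since $\alpha_{(\bi,k)}!\ge\alpha_\bi!$, Hölder applied to the inner $k$-sum together with $\|L(e_\bi,\cdot)\|_{(\ell_r)^*}\le\|L\|$ gives
\[
\sum_{k\in S_\bi}|c_{(\bi,k)}(P)||u_k|\le\frac{m!}{\alpha_\bi!}\|L\|\|u\|_r\le\frac{m^m}{\alpha_\bi!}\|P\|_\infty\|u\|_r,
\]
so $\sum_{\bj\in J}|c_\bj(P)||u_\bj|\le m^m\|P\|_\infty\|u\|_r\sum_{\bi\in J^*}|u_\bi|/\alpha_\bi!$. For $1\le r\le 2$, Hölder with exponents $r,r'$ combined with $(\alpha_\bi!)^r\ge\alpha_\bi!$ and the multinomial identity $\sum_{\bi\in\mathcal J(m-1,n)}\frac{(m-1)!}{\alpha_\bi!}|u_\bi|^r=\|u\|_r^{r(m-1)}$ yield
\[
\sum_{\bi\in J^*}\frac{|u_\bi|}{\alpha_\bi!}\le|J^*|^{1-1/r}\,\frac{\|u\|_r^{m-1}}{((m-1)!)^{1/r}},
\]
which together with Stirling's formula produces the inequality \eqref{REMMONOMIAL} with a constant of the claimed order $em\,e^{(m-1)/r}$.

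For $r>2$ the theorem's exponent $1/2$ is smaller than $1-1/r$, so the above Hölder is not sharp; instead I reduce to the $\ell_\infty$ endpoint via the substitution $\tilde P(z):=P(z\cdot u)$, which produces an $m$-homogeneous polynomial on $\ell_\infty^n$ with $\|\tilde P\|_\infty\le\|u\|_r^m\|P\|_\infty$ and coefficients $c_\bj(\tilde P)=u_\bj\,c_\bj(P)$, so it suffices to show the Sidon-constant bound $\chimon(\mathcal P(^J\ell_\infty))\le em\cdot 2^{(m-1)/2}|J^*|^{1/2}$. This last estimate is proved by the same head--tail sectioning procedure but with Khintchine's inequality (or a multilinear Bohnenblust--Hille bound) in place of the $r$-Hölder step, thereby extracting the $\sqrt{|J^*|}$ factor. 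The hard part will be the multiplicative constants: the naive polarization bound above yields $m^m/((m-1)!)^{1/r}$, which matches the claim at $r=1$ but loses a factor of order $m^{(m-1)/2}$ at $r=2$, so closing the gap requires a finer per-$\bi$ estimate of $\|L(e_\bi,\cdot)\|_{(\ell_r)^*}$ coming from a global $\ell_p$-summability of the sequence $\bigl(L(e_\bi,\cdot)\bigr)_\bi$, and likewise the $\ell_\infty$ Sidon estimate demands careful Stirling bookkeeping rather than crude polarization.
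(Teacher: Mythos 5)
Your overall architecture — pass to the symmetric $m$-linear form, decompose $\bj=(\bi,k)$ with $\bi\in J^*$, bound the inner $k$-sum by duality in $(\ell_r)^*$, then Hölder over $\bi\in J^*$ — is exactly the skeleton of the paper's proof. The gap is in the per-$\bi$ estimate, and you have already put your finger on it. Your chain
\[
\sum_{k\in S_\bi}\abs[\big]{c_{(\bi,k)}(P)}\,\abs{u_k}\le\frac{m!}{\alpha_\bi!}\,\norm[\big]{L(e_\bi,\cdot)}_{(\ell_r)^*}\,\norm{u}_r\le\frac{m!}{\alpha_\bi!}\,\norm{L}\,\norm{u}_r
\]
uses only $\norm{L(e_\bi,\cdot)}\le\norm{L}$, which is independent of $\bi$, and thus forces the multiplicity factor $m!/\alpha_\bi!$ to appear with exponent $1$. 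After the outer Hölder and Stirling this gives a constant of order $m^m/((m-1)!)^{1/r}\approx m\,e^{(m-1)/r}\,m^{(m-1)(1-1/r)}$, so the extra factor $m^{(m-1)(1-1/r)}$ is not an artefact of sloppy bookkeeping — it is structurally forced by using the operator norm of $L$ rather than a $\bi$-dependent estimate. This is not a cosmetic loss: the Konyagin--Queff\'elec machinery in Theorem~\ref{unc-basis-q} needs $C(m,r)^{1/m}$ to be bounded in $m$, and $m^{1-1/r}$ is not, so your bound would not support the later applications in this paper.

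The ingredient you are missing is that the inner $k$-sum should be handled not by the operator norm of the full form $L$, but by a Cauchy estimate applied to the $(m-1)$-homogeneous polynomial $z\mapsto A(z,\dots,z,w)$ for each fixed $w\in B_{\ell_r}$. The Cauchy estimate (Lemma~\ref{coef}) gives $\abs{c_\alpha}\le (m^m/\alpha^\alpha)^{1/r}\norm{P}_\infty$, i.e.\ the multiplicity factor enters with exponent $1/r$, not $1$; applying it to $A(\cdot,\dots,\cdot,w)$ and taking the supremum over $w$ (Corollary~\ref{LEMPOLYMIXTE}) yields $\bigl(\sum_k\abs{c_{(\bi,k)}(P)}^{r'}\bigr)^{1/r'}\le m\,e^{1+(m-1)/r}\abs{\bi}^{1/r}\norm{P}_\infty$ with $\abs{\bi}^{1/r}=((m-1)!/\alpha_\bi!)^{1/r}$. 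That $1/r$ exactly cancels against the $1/r$ you pick up from the multinomial step, and is what rescues the constant. Your $r\ge 2$ reduction to $\ell_\infty$ via $\tilde P(z)=P(zu)$ matches the paper's Lemma~\ref{lem:the_trick_poly}; but the $\ell_\infty$ endpoint bound $\chimon(\mathcal P(^J\ell_\infty))\le em2^{(m-1)/2}\abs{J^*}^{1/2}$ is only asserted, not proved — in the paper this is Lemma~\ref{mixed}, a Parseval-type inequality imported from \cite{BaDeFrMaSe14}, which is the precise form of the ``Khintchine/Bohnenblust--Hille'' step you gesture at but do not carry out.
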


\noindent
The proof is given in the following two subsections; it is different for $r\leq 2$ and for $r\geq 2$.
The case  $r=\infty$ of \eqref{start} is given in  \cite{DFOOS}, and it  uses the hypercontractive Bohnenblust-Hille inequality. The general case  $1 \leq r \leq \infty$ from \cite{DeFr11} needs sophisticated  tools  from local Banach space theory (as Gordon-Lewis and projection constants). These arguments in fact only work for the whole index set $\mathcal{J}(m,n)$, and they seem to fail in full generality for subsets $J$ of $\mathcal{J}(m,n)$.  We here provide a tricky, but quite elementary, argument which works for  arbitrary  $J$;
moreover, we  point out   that even for the special case $J= \mathcal{J}(m,n)$ we obtain  better  constants $C(m,r)$ for  \eqref{REMMONOMIAL1} than in  \cite{DeFr11}.

From \cite{DK05} we know that for each infinite dimensional Banach sequence space $X$, the Banach space $\mathcal{P}(^mX)$ never has an unconditional basis. In particular, the unconditional basis constant
$\chimon \big(\mathcal{P}(^mX)\big)$ of all monomials $(z^\bj)_{\bj \in \mathcal{J}(m)}$ is not finite. But let us note that in contrast to this there are $X$ such that for each $m$
\begin{equation*}
    \sup_n \chimon \big(\mathcal{P}(^{\mathcal{J}(m,n)}X) \big)< \infty
\end{equation*}
(this can be easily shown for $X =\ell_1$, but following \cite{DK05} there are even examples of this type different from  $\ell_1$).

\subsection{The case $ r\leq 2$}
We need several lemmas. The first one is a Cauchy estimate and can be found in  \cite[p. 323]{Boas00}. For the sake of completeness we include a streamlined  argument.

\begin{lemma}\label{coef}
    Let $1 \leq r \leq \infty$ and $\alpha \in \mathbb{N}_0^n$ with $|\alpha|=m$\,.
    Then for each $ P\in \mathcal{P}\big(^{m} \ell_r^n\big)$ we have
    \begin{equation*}
        \abs[]{ c_\alpha(P) }
        \leq \Big( \frac{m^m}{\alpha^\alpha} \Big)^{\frac{1}{r}}
            \norm[]{ P }_\infty \,.
    \end{equation*}
    In particular, for each $\bj \in\mathcal J(m,n) $ we have that
    \begin{equation*}
        \abs[]{ c_\bj (P) }
        \leq e^{\frac{m}{r}} \abs[]{ \bj }^{\frac{1}{r}}
            \norm{P}_\infty \,.
    \end{equation*}
\end{lemma}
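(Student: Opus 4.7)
The plan is to prove the first inequality via the Cauchy integral formula on an optimally chosen polydisc, and then to deduce the second from the first by an elementary combinatorial estimate.

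First, I would fix radii $t_1, \dots, t_n \geq 0$ (interpreting factors $t_i^{-\alpha_i}$ with $\alpha_i = 0$ as $1$, so that $t_i$ may be taken $0$ whenever $\alpha_i = 0$). Cauchy's integral formula applied on the distinguished boundary of the polydisc $\prod_i \{|z_i| \leq t_i\}$ yields
\[
c_\alpha(P) = \frac{1}{(2\pi)^n}\int_{\TT^n} \frac{P(t_1\theta_1,\dots,t_n\theta_n)}{(t_1\theta_1)^{\alpha_1}\cdots (t_n\theta_n)^{\alpha_n}} \, d\theta,
\]
whence $|c_\alpha(P)| \leq \|P\|_\infty \prod_i t_i^{-\alpha_i}$ provided the polydisc is contained in $B_{\ell_r^n}$. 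Since the $\ell_r$-norm on that polydisc is maximized at a vertex with modulus $\|t\|_r$, the containment holds as soon as $\sum_i t_i^r \leq 1$.

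Next I would optimize: minimize $\prod_i t_i^{-\alpha_i}$ under the constraint $\sum_i t_i^r \leq 1$. A standard Lagrange multiplier computation (or weighted AM-GM) forces $t_i^r = \alpha_i/m$ at the optimum (for those $i$ with $\alpha_i > 0$), and substituting back gives
\[
\prod_i t_i^{-\alpha_i} = \prod_i \zj{\tfrac{m}{\alpha_i}}^{\alpha_i/r} = \zj{\tfrac{m^m}{\alpha^\alpha}}^{1/r},
\]
which is exactly the first claimed bound.

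For the second inequality, recalling that $c_{\bj_\alpha}(P) = c_\alpha(P)$ under the bijection between $\mathcal J(m,n)$ and $\Lambda(m,n)$, with $|\bj_\alpha| = m!/\alpha!$, it suffices to show
\[
\frac{m^m}{\alpha^\alpha} \leq e^m \cdot \frac{m!}{\alpha!}.
\]
This factors as the combination of $m^m/m! \leq e^m$ (immediate from the Taylor expansion $e^m = \sum_k m^k/k!$) and the termwise estimate $\alpha_i! \leq \alpha_i^{\alpha_i}$ for each $i$ (with $0! = 0^0 = 1$), which multiplied over $i$ give $\alpha! \leq \alpha^\alpha$. Raising to the power $1/r$ and substituting into the first bound produces the stated estimate $|c_\bj(P)| \leq e^{m/r} |\bj|^{1/r} \|P\|_\infty$.

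The only nontrivial step is the constrained optimization; everything else is either Cauchy's formula or elementary Stirling-type bookkeeping. I do not foresee any real obstacle beyond handling the degenerate coordinates $\alpha_i = 0$ cleanly, which is done by the convention $0^0 = 1$ and by simply setting the corresponding $t_i$ to zero.
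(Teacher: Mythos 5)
Your proof is correct and follows essentially the same route as the paper: both apply the Cauchy integral formula on the polydisc with polyradius $t_i = (\alpha_i/m)^{1/r}$ (the paper simply writes down this optimal choice, whereas you derive it by Lagrange optimization), and both deduce the second estimate from the elementary inequalities $m^m/m! \leq e^m$ and $\alpha! \leq \alpha^\alpha$. The extra detail you supply (the optimization step and the proof of the combinatorial bound, which the paper leaves implicit) is sound.
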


\begin{proof}
Define $u= m^{-1/r} (\alpha_1^{1/r},  \ldots,\alpha_n^{1/r}) \in B_{\ell_r^n}$.
 Then by the Cauchy integral formula for each $P \in \mathcal{P}\big(^{m} \ell_r^n\big) $
\[
c_\alpha(P) = \frac{1}{(2\pi i)^n} \int_{|z_n|=u_n} \ldots \int_{|z_1|=u_1}
\frac{P(z)}{z^\alpha z_1 \ldots z_n} dz\,.
\]
Hence we obtain
\[
|c_\alpha(P)| \leq \frac{1}{|u^\alpha|} \|P\|_\infty= \Big( \frac{m^m}{\alpha^\alpha} \Big)^{\frac{1}{r}}\|P\|_\infty\,,
\]
the conclusion. For the second inequality note first that $ \Big( \frac{m^m}{\alpha^\alpha} \Big)^{\frac{1}{r}}\leq e^{m/r}  \Big(\frac{m!}{\alpha!}\Big)^{1/r}\,,$ and recall that if we associate to $\bj$ the multi index $\alpha$, then
$\frac{m!}{\alpha!} = |\bj|$\,.
\end{proof}

\begin{corollary}\label{LEMPOLYMIXTE}
 Consider the linear operator $Q \in \mathcal L\big( \ell_r^n,\mathcal P(^{m-1}\ell_r^n)\big)$ defined by
\begin{equation*}
  Q(z,w)=\sum_{\bj\in\jbn{m-1}}\left(\sum_{k=1}^n b_{(\bj,k)}z_{\bj}\right)w_k
  \, ,
\end{equation*}
where $z,w \in \ell_r^n$. Then for any $\bj\in\jbn{m-1}$ ,
$$\left(\sum_{k=1}^n |b_{(\bj,k)}|^{r'}\right)^{1/r'}\leq e^{\frac{m-1}r}{ |\bj|}^{1/r} \|Q\|_\infty.$$
\end{corollary}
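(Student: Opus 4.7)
The plan is to obtain the bound by extracting, for each fixed $\bj\in\jbn{m-1}$, the linear functional $w\mapsto \sum_{k=1}^n b_{(\bj,k)} w_k$ as a single monomial coefficient of an $(m-1)$-homogeneous polynomial in $z$, and then applying Lemma \ref{coef} to it.

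More precisely, fix $w \in \ell_r^n$. Then the map $P_w : z \mapsto Q(z,w)$ is by construction a polynomial in $\mathcal{P}(^{m-1}\ell_r^n)$ whose monomial expansion, read off directly from the definition of $Q$, is
\begin{equation*}
    P_w(z) = \sum_{\bj \in \jbn{m-1}} \Bigl( \sum_{k=1}^n b_{(\bj,k)}\, w_k \Bigr)\, z_{\bj}\,.
\end{equation*}
In particular, for each $\bj$ the coefficient $c_\bj(P_w)$ equals the linear form $L_\bj(w) := \sum_{k=1}^n b_{(\bj,k)}\, w_k$ evaluated at $w$. Apply the second estimate of Lemma~\ref{coef} to $P_w \in \mathcal{P}(^{m-1}\ell_r^n)$: this yields
\begin{equation*}
    |L_\bj(w)| = |c_\bj(P_w)| \leq e^{(m-1)/r}\, |\bj|^{1/r}\, \|P_w\|_\infty\,.
\end{equation*}

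Next I would control $\|P_w\|_\infty$ by the operator norm of $Q$. Since $Q : \ell_r^n \to \mathcal{P}(^{m-1}\ell_r^n)$ is linear in $w$, we have $\|P_w\|_\infty = \|Q(w)\|_{\mathcal{P}(^{m-1}\ell_r^n)} \leq \|Q\|_\infty \|w\|_r$. Chaining the two inequalities gives
\begin{equation*}
    |L_\bj(w)| \leq e^{(m-1)/r}\, |\bj|^{1/r}\, \|Q\|_\infty\, \|w\|_r \quad \text{for every } w \in \ell_r^n.
\end{equation*}

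Taking the supremum over $w \in B_{\ell_r^n}$, this says that the norm of $L_\bj$ as a functional on $\ell_r^n$ is at most $e^{(m-1)/r} |\bj|^{1/r} \|Q\|_\infty$. By the duality $(\ell_r^n)^* = \ell_{r'}^n$, this norm is exactly $\bigl(\sum_{k=1}^n |b_{(\bj,k)}|^{r'}\bigr)^{1/r'}$, yielding the claim. There is no real obstacle: the only subtle point is simply identifying the coefficient of the monomial $z_\bj$ in $P_w$ as the desired linear combination $\sum_k b_{(\bj,k)} w_k$, after which Lemma~\ref{coef} and $\ell_r$--$\ell_{r'}$ duality finish the argument.
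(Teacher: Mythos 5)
Your proof is correct and follows essentially the same route as the paper: fix $w$, identify $\sum_k b_{(\bj,k)} w_k$ as the coefficient $c_\bj$ of the $(m-1)$-homogeneous polynomial $Q(\cdot,w)$, apply Lemma~\ref{coef}, and then take the supremum over $w\in B_{\ell_r^n}$ and use $(\ell_r^n)^* = \ell_{r'}^n$. You merely spell out the final duality step, which the paper leaves implicit in the phrase ``we now take the supremum over all possible $w$.''
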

\begin{proof}
 Let us fix $w\in B_{\ell_r^n}$. Then $Q(\cdot,w)\in \mathcal P(^{m-1}\ell_r^n)$. Thus, by the preceding lemma for any $\bj\in\jbn{m-1}$,
\begin{eqnarray*}
 \left|\sum_{k=1}^n b_{(\bj,k)}w_k\right|\leq e^{\frac{m-1}r}{ |\bj|}^{1/r}\sup_{z\in B_{\ell_r^n}}|Q(z,w)|
\leq&e^{\frac{m-1}r}{ |\bj|}^{1/r}\|Q\|_\infty.
\end{eqnarray*}
We now take the supremum over all possible $w\in B_{\ell_r^n}$.
\end{proof}

\begin{lemma}\label{LEMPOLY}
 Let $P\in\mathcal P(^m\ell_r^n)$. Then for any $\bj\in \jbn{m-1}$
$$\left(\sum_{k=j_{m-1}}^n |c_{(\bj,k)}(P)|^{r'}\right)^{1/r'}\leq m e^{1+\frac{m-1}r}{|\bj|}^{1/r} \|P\|_\infty.$$
\end{lemma}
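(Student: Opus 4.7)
The plan is to deduce the estimate from Corollary \ref{LEMPOLYMIXTE} applied to a bilinear form naturally extracted from $P$. Let $L$ denote the symmetric $m$-linear form with $P(z) = L(z, \ldots, z)$, and define
\[
    Q(z, w) := L(z, \ldots, z, w) = \frac{1}{m}\sum_{k=1}^n w_k\, \frac{\partial P}{\partial z_k}(z)\,.
\]
This $Q$ is $(m-1)$-homogeneous in $z$ and linear in $w$, so it fits the framework of Corollary \ref{LEMPOLYMIXTE}. Two tasks remain: (i) bound $\|Q\|_\infty$ by an absolute constant times $\|P\|_\infty$, and (ii) compare the coefficients $b_{(\bj,k)}$ of $Q$ to the monomial coefficients $c_{(\bj,k)}(P)$.

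For step (i), the naive polarization estimate $\|Q\|_\infty \leq \frac{m^m}{m!}\|P\|_\infty$ is of order $e^m$, which is far too weak. Instead, for fixed $z, w \in B_{\ell_r^n}$, I would view $\phi(t) = P(z + tw)$ as a polynomial in the complex variable $t$ and apply the Cauchy integral formula
\[
    m\, Q(z, w) = \phi'(0) = \frac{1}{2\pi i}\oint_{|t|=\rho}\frac{\phi(t)}{t^2}\,dt\,.
\]
Since $\|z + tw\|_r \leq 1+\rho$ on $|t|=\rho$, this gives $|Q(z,w)| \leq \frac{(1+\rho)^m}{m\rho}\|P\|_\infty$. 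Minimizing in $\rho$ at $\rho = 1/(m-1)$, together with $(1 + 1/(m-1))^{m-1} \leq e$, yields $\|Q\|_\infty \leq e\,\|P\|_\infty$.

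For step (ii), expand $L(z^{(1)}, \ldots, z^{(m)}) = \sum_{\bi \in \mathcal M(m,n)} a_\bi\, z^{(1)}_{i_1}\cdots z^{(m)}_{i_m}$ with symmetric coefficients $a$. Comparing $P$ with $L$ reveals $c_{\bj'}(P) = |\bj'|\, a_{\bj'}$ for $\bj' \in \mathcal J(m,n)$; similarly, collecting terms in $Q$ yields $b_{(\bj, k)} = |\bj|\, a_{\bj'}$, where $\bj' \in \mathcal J(m,n)$ is the sorted tuple obtained from $(\bj, k)$. Hence $b_{(\bj,k)} = \frac{|\bj|}{|\bj'|}\, c_{\bj'}(P) = \frac{\alpha_k + 1}{m}\, c_{\bj'}(P)$, where $\alpha_k$ is the multiplicity of $k$ in $\bj$, and so $|c_{\bj'}(P)| \leq m\,|b_{(\bj,k)}|$. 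When $k \geq j_{m-1}$ the tuple $(\bj, k)$ is already sorted, so $\bj' = (\bj, k)$ and consequently $|c_{(\bj, k)}(P)| \leq m\, |b_{(\bj, k)}|$.

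Combining the two steps with Corollary \ref{LEMPOLYMIXTE} produces
\[
    \Big(\sum_{k=j_{m-1}}^n |c_{(\bj,k)}(P)|^{r'}\Big)^{1/r'} \leq m \Big(\sum_{k=1}^n |b_{(\bj,k)}|^{r'}\Big)^{1/r'} \leq m\,e^{(m-1)/r}|\bj|^{1/r}\|Q\|_\infty \leq m\,e^{1+(m-1)/r}|\bj|^{1/r}\|P\|_\infty\,,
\]
which is the desired bound. The main difficulty is step (i): the improvement from $e^m$ (naive polarization) down to $e$ hinges on the carefully chosen Cauchy radius $\rho = 1/(m-1)$. Step (ii) is routine, if slightly tedious, combinatorial bookkeeping.
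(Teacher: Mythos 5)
Your proof is correct and follows essentially the same route as the paper: form the $(m-1,1)$-polarization $Q(z,w)=L(z,\ldots,z,w)$, compare its coefficients $b_{(\bj,k)}$ with $c_{(\bj,k)}(P)$ via $|(\bj,k)|\le m|\bj|$, and apply Corollary \ref{LEMPOLYMIXTE}. The only deviation is that you derive the estimate $\|Q\|_\infty\le e\,\|P\|_\infty$ from scratch via a one-variable Cauchy integral with the optimally chosen radius $\rho=1/(m-1)$, whereas the paper simply cites Harris' polarization formula for this step.
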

\begin{proof}
 Let $A:\ell_r^n \times \ldots \times \ell_r^n \rightarrow \mathbb{C} $ be the symmetric $m$-linear form associated to $P$,
$$A(z^{(1)}, \ldots,z^{(m)} )=\sum_{\bi\in\mmn}a_\bi(A) z_{i_1}^{(1)}\cdots z_{i_m}^{(m)}\,;$$
in particular, for each $\bj\in \jbn{m}$  we have
$
a_{\bj}(A)  =\frac{c_{\bj}(P)} {|\bj|}.
$
For  $z,w \in \ell_r^n$  define the linear operator
$$Q(z,w)= A(z,\dots,z,w)\in\mathcal L\left(\ell_r^n,\mathcal P(^{m-1}\ell_r^n)\right)\,;$$
then a simple calculation proves
\begin{eqnarray*}
 Q(z,w)&=&\sum_{\bi\in\mmn}a_\bi(A) z_{i_1}\cdots z_{i_{m-1}} w_{i_m}\\
&=&\sum_{\bi\in \mathcal{M}(m-1,n)} \sum_{k=1}^n a_{(\bi,k)}(A) z_{i_1}\cdots z_{i_{m-1}} w_{k}\\
&=& \sum_{\bj\in \mathcal{J}(m-1,n)}
\sum_{\bi\in [\bj]}
\sum_{k=1}^n a_{(\bi,k)}(A) z_{i_1}\cdots z_{i_{m-1}} w_{k}
\\
&=& \sum_{\bj\in \mathcal{J}(m-1,n)}
\sum_{k=1}^n
\left( \sum_{\bi\in [\bj]}
 a_{(\bi,k)}(A) z_{i_1}\cdots z_{i_{m-1}}\right) w_{k}
\\
&=& \sum_{\bj\in \mathcal{J}(m-1,n)} \sum_{k=1}^n \left(a_{(\bj,k)}(A)|\bj|  z_{j_1}\cdots z_{j_{m-1}}
\right)w_{k}\,.
\end{eqnarray*}
Now note  that for every $\bj \in \mathcal{J}(m-1,n)$ we have
$|(\bj,k)| \leq m  |\bj|\,,
$
and hence by the  preceding corollary
for such  $\bj$

\begin{align*}
\left(\sum_{k: j_{m-1} \leq k} \left|c_{(\bj,k)}(P)\right|^{r'}\right)^{1/r'}
&
=
\left(\sum_{k: j_{m-1} \leq k} \left|a_{(\bj,k)}(A)|(\bj,k)|\right|^{r'}\right)^{1/r'}
\\&
\leq
m \,\left(\sum_{k=1}^n \left|a_{(\bj,k)}(A)    |\bj|\right|^{r'}\right)^{1/r'}
\leq  m \,e^{\frac{m-1}r}{|\bj|}^{1/r} \|Q\|_\infty\,.
\end{align*}
Finally, by Harris' polarization formula we know that $\|Q\|_\infty \leq e\|P\|_\infty$, and hence we  obtain the desired conclusion.
\end{proof}

\noindent
Now we are ready to give the
\begin{proof}[Proof of Theorem \ref{THMMONOMIAL} for $1 \leq r\leq 2$.]
     Take
    $P\in\mathcal P(^{\mathcal{J}(m,n)}\ell_r) $, $J \subset \mathcal{J}(m,n)$ and  $u \in \ell_r$. Then, by Lemma \ref{LEMPOLY},  for any $\bj\in J^*$,
    $$\left(\sum_{k:\ (\bj,k)\in J} |c_{(\bj,k)}(P)|^{r'}\right)^{1/r'}\leq\left(\sum_{k=j_{m-1}}^n |c_{(\bj,k)}(P)|^{r'}\right)^{1/r'}\leq m e^{1+\frac{m-1}r}{|\bj|}^{1/r} \|P\|_\infty.$$
     Now by H\"older's inequality (two times) and the multinomial formula we have
    \begin{eqnarray*}
         \sum_{\bj\in J} |c_{\bj}(P)| |u_\bj| &=&
         \sum_{\bj\in J^*} \sum_{k:\ (\bj,k)\in J} |c_{(\bj,k)}| |u_\bj| |u_k|\\&\leq&
        \sum_{\bj\in J^*}|u_\bj| \,\, \left(\sum_{k:\ (\bj,k)\in J} |c_{(\bj,k)}|^{r'}\right)^{1/r'}
        \left(\sum_{k} |u_k|^r\right)^{1/r}\\
        &\leq&m e^{1+\frac{m-1}r}\sum_{\bj\in J^*} {|\bj|}^{1/r}|u_\bj| \|u\|_r \|P\|_\infty\\
        &\leq&m e^{1+\frac{m-1}r} \left(\sum_{\bj\in J^*} {|\bj|}|u_\bj|^r\right)^{1/r}\left(\sum_{\bj\in J^*} 1\right)^{1/r'} \|u\|_r \|P\|_\infty\\
        &\leq&m e^{1+\frac{m-1}r} \left(\sum_{\bj\in \mathcal{J}(m-1,n)} {|\bj|}|u_\bj|^r\right)^{1/r}\left(\sum_{\bj\in J^*} 1\right)^{1/r'} \|u\|_r \|P\|_\infty\\
        &=&m e^{1+\frac{m-1}r} |J^*|^{1-\frac 1r}\|u\|_r^m \|P\|_\infty\,.
    \end{eqnarray*}
    In order to deduce \eqref{REMMONOMIAL1}, note that for every finite  $J \subset \mathcal{J}(m)$ there is $n$ such that
    $J \subset \mathcal{J}(m,n)$. Then every $P \in \mathcal{P}(^J \ell_r)$ can be considered as a polynomial in    $\mathcal{P}(^{\mathcal{J}(m,n)} \ell_r)$  with equal norm, which implies the conclusion.
\end{proof}

\subsection{The case  $r\geq 2$}
Note first that the simple argument from the proof of  Lemma \ref{lem:the_trick_poly} shows that we only have to deal with the case $r=\infty$.
 For  $r=\infty$ we  need another lemma  which substitutes the argument (by  Cauchy's estimates) from  Lemma \ref{coef}. It is an improvement of Parseval's identity, and its  proof can be found in
\cite[Lemma 2.5]{BaDeFrMaSe14}.

\begin{lemma} \label{mixed}
 Let $P\in \mathcal P(^{\mathcal J(m,n)} \ell_\infty)$. Then
$$\sum_{k=1}^n \left(\sum_{\substack{\bj\in\mathcal J(m-1,n)\\ j_{m-1}\leq k}}|c_{(\bj,k)}(P)|^2\right)^{1/2}\leq em2^{\frac{m-1}{2}}\|P\|_\infty.$$
\end{lemma}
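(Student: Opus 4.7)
The strategy is to pass from $P$ to its symmetric $m$-linear form, peel off one distinguished slot via Harris' one-slot polarisation, estimate the resulting $\ell_1$-in-$k$ sum of $L^1(\mathbb T^n)$-norms by a pointwise duality trick in the extracted variable, and finally upgrade this $L^1$-bound to the target $L^2$-bound via Kahane's inequality for polynomials of degree $m-1$ on the polytorus.

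Concretely, let $A$ be the symmetric $m$-linear form with $P(z) = A(z,\ldots,z)$, set $Q_k(z) := A(z,\ldots,z,e_k)$, and form the $(m-1)$-homogeneous-in-$z$, linear-in-$w$ expression
$$Q(z,w) := A(z,\ldots,z,w) = \sum_{k=1}^n w_k Q_k(z).$$
By Harris' polarisation (the same one-slot version already invoked in the proof of Lemma~\ref{LEMPOLY}) we have $\|Q\|_\infty \leq e\|P\|_\infty$. Fixing $z \in \mathbb T^n$ and choosing $w_k = \overline{Q_k(z)}/|Q_k(z)|$ in $B_{\ell_\infty^n}$ yields $\sum_k |Q_k(z)| \leq \|Q\|_\infty$; integrating over $z \in \mathbb T^n$ gives $\sum_k \|Q_k\|_{L^1(\mathbb T^n)} \leq e \|P\|_\infty$. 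Since each $Q_k$ is a polynomial of degree $m-1$ in the Steinhaus variables $z_1,\ldots,z_n$, Kahane's inequality on the polytorus produces $\|Q_k\|_{L^2} \leq 2^{(m-1)/2}\|Q_k\|_{L^1}$, and summing over $k$ yields
$$\sum_{k=1}^n \|Q_k\|_{L^2(\mathbb T^n)} \leq e\cdot 2^{(m-1)/2}\,\|P\|_\infty.$$

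It remains to translate $\|Q_k\|_{L^2}$ back into the coefficient sum appearing in the lemma. Expanding $Q_k$ and exploiting the symmetry of $A$ shows that the coefficient of $z_{\bj'}$ in $Q_k$ equals $|\bj'|\,a_{(\bj',k)}(A)$ (where $a_{(\bj',k)}(A)$ depends only on the multiset $\{\bj',k\}$), so Parseval on $\mathbb T^n$ gives
$\|Q_k\|_{L^2}^2 = \sum_{\bj' \in \mathcal J(m-1,n)} |\bj'|^2|a_{(\bj',k)}(A)|^2$. Restricting this sum to those $\bj'$ with $j'_{m-1}\leq k$ -- precisely the $\bj'$ for which $(\bj',k) \in \mathcal J(m,n)$ is already sorted -- and combining $c_\bj(P) = |\bj|\,a_\bj(A)$ with the elementary identity $|(\bj',k)|/|\bj'| = m/(\alpha'_k+1) \leq m$, one arrives at $|c_{(\bj',k)}(P)|^2 \leq m^2 |\bj'|^2|a_{(\bj',k)}(A)|^2$, hence $\big(\sum_{\bj':\,j'_{m-1}\leq k}|c_{(\bj',k)}(P)|^2\big)^{1/2} \leq m \|Q_k\|_{L^2}$. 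Summing over $k$ gives the announced bound $em\cdot 2^{(m-1)/2}\|P\|_\infty$.

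The only delicate step is the combinatorial bookkeeping in that last paragraph: Parseval naturally produces an $L^2$-norm over the \emph{full} index set $\mathcal J(m-1,n)$, while the target is restricted to $\{j'_{m-1}\leq k\}$ and indexed by the unsorted tuples $(\bj',k)$; one must check that the excess terms are harmless and that the multinomial ratio $|(\bj',k)|/|\bj'|$ stays bounded by $m$, which is exactly what introduces the linear factor $m$ into the final constant. The two other ingredients -- Harris' one-slot polarisation on the polydisc and Kahane's inequality on $\mathbb T^n$ -- are classical and furnish the remaining factors $e$ and $2^{(m-1)/2}$ respectively.
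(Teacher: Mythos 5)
Your proof is correct, and it reconstructs what is almost certainly the proof of the cited reference (the paper itself does not give one, deferring to Lemma~2.5 of [BaDeFrMaSe14], which is based on exactly these two ingredients). Each piece checks out: Harris' one-slot polarisation gives $\|Q\|_\infty \le (1+\tfrac{1}{m-1})^{m-1}\|P\|_\infty \le e\|P\|_\infty$, which is precisely the form the paper already invokes in the proof of Lemma~\ref{LEMPOLY}; the phase-alignment $w_k = \overline{Q_k(z)}/|Q_k(z)|$ (with $w_k=1$ when $Q_k(z)=0$) puts $w$ on the distinguished boundary $\mathbb T^n$, and since the sup of $|Q|$ over the open polydisc equals the sup over $\mathbb T^n\times\mathbb T^n$, you indeed get $\sum_k|Q_k(z)|\le\|Q\|_\infty$ pointwise and hence $\sum_k\|Q_k\|_{L^1}\le e\|P\|_\infty$; and the hypercontractive bound $\|Q_k\|_{L^2}\le 2^{(m-1)/2}\|Q_k\|_{L^1}$ for $(m-1)$-homogeneous analytic polynomials on $\mathbb T^n$ is correct. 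The bookkeeping via $c_{(\bj',k)}(P)=\abs{(\bj',k)}\,a_{(\bj',k)}(A)$ and $\abs{(\bj',k)}/\abs{\bj'}=m/(\alpha'_k+1)\le m$ is right, and discarding the $\bj'$ with $j'_{m-1}>k$ only makes the Parseval sum larger, so the restriction is harmless.

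Two small comments. First, the inequality $\|f\|_{L^2}\le 2^{d/2}\|f\|_{L^1}$ for degree-$d$ analytic polynomials on the polytorus is not usually called Kahane's inequality (that name belongs to the Banach-valued moment comparison for Rademacher sums); it is Weissler's hypercontractive estimate tensorised to $\mathbb T^n$, and in this circle of ideas it is the key ingredient of \cite{DFOOS}. Second, your phrase ``$w_k\in B_{\ell_\infty^n}$'' is off by a boundary: with $|w_k|=1$ you land on $\overline{B_{\ell_\infty^n}}$, but continuity of the polynomial $Q$ and the distinguished maximum principle make this a non-issue. Neither affects the argument.
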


\noindent
We are now ready for the

\begin{proof}[Proof of Theorem \ref{THMMONOMIAL} for $r=\infty$]
    Let $P\in \mathcal P(^{\mathcal J(m,n)} \ell_\infty)$. Then, for any
    $u\in B_{\ell_\infty}$, by the Cauchy-Schwarz inequality and the preceding Lemma \ref{mixed} we have
    \begin{eqnarray*}
        \sum_{\bj\in J}|c_{\bj}(P)||u_{\bj}|&\leq&\sum_{k=1}^n \left(\sum_{\substack{\bj\in J^*\\ (\bj,k)\in J}}|c_{(\bj,k)}|\right)\\
        &\leq&\sum_{k=1}^n \left(\sum_{\substack{\bj\in J^*\\ (\bj,k)\in J}}|c_{(\bj,k)}|^2\right)^{1/2} \left|\{\bj \in J^*: (\bj,k) \in J\}\right|^{1/2}\\
        &\leq&\sum_{k=1}^n\left(\sum_{\substack{\bj\in\mathcal J(m-1,n)\\ j_{m-1}\leq k}}|c_{(\bj,k)}|^2\right)^{1/2}
        |J^*|^{1/2}\\
        &
        \leq& em2^{\frac{m-1}{2}} |J^*|^{1/2}\|P\|_\infty.
    \end{eqnarray*}
    For the second statement, see again the argument from the proof in the case $1 \leq r \leq 2 $. This finally completes the proof of Theorem \ref{THMMONOMIAL}.
\end{proof}

\begin{remark}
 It is natural to ask for lower bounds of $\chimon\big(\pjlr\big)$ using $|J|$ or $|J^*|$. For the whole set of $m$-homogeneous polynomials, this has been done in
\cite{DGM03} for $r\geq 2$ and in \cite{BAYMAXMOD} for $1\leq r\leq 2$. Using the Kahane-Salem-Zygmund inequality, we can give such a lower bound at least for the case
$r=\infty$. Indeed, assume that $J\subset\mathcal J(m,n)$. Then there exists some absolut constant $C>0$ and signs $(\veps_\bj)_{\bj\in J}$ such that
$$\sup_{u\in B_{\ell_\infty^n}} \left|\sum_{\bj\in J}\veps_{\bj}u_\bj\right|\leq C n^{1/2}|J|^{1/2}(\log m)^{1/2}.$$
Now, the inequality
$$|J|=\sup_{u\in B_{\ell_\infty^n}} \sum_{\bj\in J}|\veps_\bj||u_\bj|\leq\chimon\big(\mathcal P(^J\ell_\infty)\big)\sup_{u\in B_{\ell_\infty^n}} \left|\sum_{\bj\in J}\veps_{\bj}u_\bj\right|$$
yields
$$\chimon\big(\mathcal P(^J\ell_\infty)\big)\geq \frac{|J|^{1/2}}{Cn^{1/2}(\log m)^{1/2}}.$$
However, the inequality given by Theorem \ref{THMMONOMIAL} is very bad if $J$ involves many independent variables;
see in particular \eqref{num3}.
\end{remark}

\begin{remark}

Given  an index set $J \subset \mathcal{J}$, we define  the Bohr radius of a  Reinhardt $R$ in $\mathbb{C}^n$ with respect to $J$ by
\begin{equation*}
    K(R;J)
        = \sup \Big\{
            0 \leq r \leq 1
        \,\big\vert\,
            \forall f \in H_\infty(R):
            \sup_{u \in rR} \,
                \sum_{\bj \in J}
                    \abs[\big]{ c_\bj(f) u_\bj }
                \leq \norm[]{ f }_\infty
        \Big\}\,.
\end{equation*}
The standard multi-variable Bohr radius is then denoted by  $K(R) =K(R;\mathcal{J}) $. Let us recall the two most important results on Bohr radii: For the open unit disc $R= \mathbb{D}$, Bohr's power series theorem states that $K(\mathbb{D}) = \frac{1}{3}$, and in \cite{BaPeSe13}(following the main idea of \cite{DFOOS}) it was recently proved that
\begin{equation*} \label{limit}
\lim_{n \rightarrow \infty} \frac{K(B_{\ell^n_\infty})}{ \sqrt{\frac{\log n}{n}}} =1\,.
\end{equation*}
For every $1 \leq r \leq \infty$ and every $n$ (with constants depending on $r$ only) we have
\begin{equation} \label{newproof}
K(B_{\ell_r^n})\,\,\asymp \,\,\Bigg( \frac{\log n}{n}\Bigg)^{1- \frac{1}{\min\{r,2\}}}\,.
\end{equation}
The probabilistic argument for the  upper estimate is  due to \cite{Boas00} (see also \cite{DGM03}), and
the proof of the lower estimate from \cite{DeFr11} uses symmetric tensor products and local Banach space theory. We here sketch  a simplified
argument based on Theorem \ref{THMMONOMIAL}.

\begin{theorem}
Let $1 \leq r \leq \infty$ and $\sigma = 1 -\frac{1}{\min(r,2)}$. Then  there is a constant $C=C(r)$ such for every
$J \subset \mathcal{J}$ and every $n$
\begin{equation}\label{caratheodory}
\displaystyle
 \frac{C}{\sup_m \abs[\big]{ (J(m,n)^*  }^{\frac{\sigma}{m}}} \leq K(B_{\ell^n_r}; J)\,,
\end{equation}
where  $J(m,n):= J \cap \mathcal{J}(m,n)$ and $C \ge \tfrac{1}{3 e^2 } > 0$.
\end{theorem}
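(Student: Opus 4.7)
The plan is to fix $\rho:=C/K$ with $K:=\sup_m\abs{J(m,n)^*}^{\sigma/m}$ and verify that $\rho$ lies in the set defining $K(B_{\ell_r^n};J)$: for every $f\in H_\infty(B_{\ell_r^n})$ with $\|f\|_\infty\le1$ and every $u\in\rho B_{\ell_r^n}$, the total monomial sum $\sum_{\bj\in J}\abs{c_\bj(f)}\abs{u_\bj}$ should be $\le 1$. First I would decompose $f=\sum_{m\ge0}P_m$ into its $m$-homogeneous Taylor parts via the Fourier projection $P_m(z)=\int_{\mathbb T}f(\lambda z)\lambda^{-m}\,d\lambda$; then the monomial coefficients of $f$ at multi-indices of order $m$ coincide with those of $P_m\in\mathcal P(^{\mathcal J(m,n)}\ell_r)$. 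Theorem~\ref{THMMONOMIAL} applied to $P_m$ and the truncated index set $J(m,n):=J\cap\mathcal J(m,n)$ then yields, for each $m\ge 1$,
\[
\sum_{\bj\in J(m,n)}\abs{c_\bj(f)}\abs{u_\bj}\le C(m,r)\abs{J(m,n)^*}^\sigma\|u\|_r^m\|P_m\|_\infty\le C(m,r)(K\rho)^m\|P_m\|_\infty,
\]
where the second inequality uses $\abs{J(m,n)^*}^\sigma\le K^m$ and $\|u\|_r\le\rho$.

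Next I would install the Wiener-type refinement $\|P_m\|_\infty\le1-\abs{f(0)}^2$ for $m\ge1$, obtained by slicing along complex lines: for each $v\in B_{\ell_r^n}$ the function $h_v(\zeta):=f(\zeta v)$ is analytic on $\mathbb D$, bounded by $1$, with Taylor expansion $\sum_m P_m(v)\zeta^m$, so the classical one-dimensional Schur coefficient inequality $\abs{\hat h_v(m)}\le1-\abs{h_v(0)}^2$ transfers to $\abs{P_m(v)}\le1-\abs{f(0)}^2$; taking the supremum over $v$ yields the claim. Summing over $m$,
\[
\sum_{\bj\in J}\abs{c_\bj(f)u_\bj}\le\abs{f(0)}+(1-\abs{f(0)}^2)S,\qquad S:=\sum_{m\ge1}C(m,r)(K\rho)^m,
\]
and the requirement that the right-hand side be $\le 1$ simplifies, after factoring $1-\abs{f(0)}$, to $(1+\abs{f(0)})S\le1$. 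Since $\abs{f(0)}<1$ is arbitrary, it suffices to arrange $S\le\tfrac12$.

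It remains to estimate $S$. Using the unified crude bound $C(m,r)\le me^m$, valid for all $r\in[1,\infty]$ since $e^{1/r}\le e$ and $\sqrt 2\le e$, one has $S\le\sum_{m\ge1}m(eK\rho)^m=eK\rho/(1-eK\rho)^2$. Choosing $\rho=1/(3e^2K)$, i.e.\ $C=1/(3e^2)$, makes $eK\rho=1/(3e)$, so $S\le 3e/(3e-1)^2<\tfrac12$, closing the argument. The main technical input is Theorem~\ref{THMMONOMIAL}; on top of it, the only mildly delicate ingredient is the classical Schur coefficient inequality producing the $1-\abs{f(0)}^2$ factor, without which the absolute constant would deteriorate noticeably.
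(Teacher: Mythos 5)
Your proof is correct, and it differs from the paper's in structure even though the underlying ideas are the same. The paper cites two facts from Defant--Garc\'ia--Maestre: the Bohr-type reduction $\tfrac13\inf_m K(B_{\ell_r^n};J(m,n))\le K(B_{\ell_r^n};J)$ and the identity $K(B_{\ell_r^n};J(m,n)) = \chi_{\mon}\big(\mathcal P(^{J(m,n)}\ell_r)\big)^{-1/m}$, and then plugs in the bound $C(m,r)^{1/m}\le e^2$ from Theorem~\ref{THMMONOMIAL}, passing through an infimum over $m$. You bypass both citations: the Fourier decomposition $f=\sum_m P_m$ together with the one-dimensional Wiener inequality $\|P_m\|_\infty\le 1-|f(0)|^2$ for $m\ge1$ reproduces exactly the mechanism behind the cited ``$1/3$'' reduction, and the homogeneous identity is replaced by summing the resulting geometric-type series $\sum_m C(m,r)(K\rho)^m$ directly. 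Both routes land on the same constant $C=\tfrac{1}{3e^2}$ (your numerics $3e/(3e-1)^2<\tfrac12$ check out, as does the uniform crude bound $C(m,r)\le me^m$ using $e^{1/r}\le e$ and $\sqrt2\le e$). The net effect is that your argument is self-contained where the paper's is modular; the paper's formulation makes the source of each factor ($1/3$ versus $e^2$) more transparent, while yours exposes explicitly why the series over homogeneous degrees is summable, which is where the constant really comes from.

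One small remark worth being careful about: the set $\mathcal J$ as used in the paper does not include the empty index, so in the Bohr-radius definition there is no constant term in the sum, and one only needs $(1-|f(0)|^2)S\le1$ rather than $|f(0)|+(1-|f(0)|^2)S\le1$. Your stronger requirement $(1+|f(0)|)S\le1$ covers both conventions, so nothing is lost, but the $|f(0)|$ term is not actually present in the target inequality.
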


\begin{proof}
    By a  simple analysis of   \cite[Theorem 2.2]{DGM03} as well as \cite[Lemma 2.1]{DGM03} we have
    \begin{equation*}  \label{reduction}
        \frac{1}{3}  \inf_m  K\left(B_{\ell^n_r}; J(m,n)\right) \, \leq K(B_{\ell^n_r};J) \,,
    \end{equation*}
    and
    \begin{equation*}  \label{unc-bohr}
        K \big( B_{\ell^n_r}; J(m,n) \big)
            = \frac{1}{
                \sqrt[m]{ \chimon\big( \mathcal{P}(^{J(m,n)} \ell_r) \big) }
            }\,.
    \end{equation*}
    Then the conclusion is an immediate consequence of Theorem \ref{THMMONOMIAL} and the simple fact that for the constant $C(m,r) \leq e me^{(m-1)/ \min\{ r,2\}} \le e^{2m}$.
\end{proof}

\noindent
Now the proof for the  lower bound in \eqref{newproof} follows from the the  special case $J=\mathcal{J}$. Indeed,
\begin{equation*}
    J^\ast (m,n) =\mathcal{J}(m-1,n)=\binom{(m-1)+n-1}{m-1} \leq e^{m-1} \Big(1 + \frac{n}{m-1} \Big)^{m-1}\,,
\end{equation*}
hence inserting this estimate into \eqref{caratheodory} and minimizing over $m$ gives what we want.

\end{remark}

\section{The Konyagin-Queff\'elec method} \label{KQ-method}
We now apply Theorem~\ref{THMMONOMIAL} to a special method of summation which was originally used by  Konyagin and  Queff\'elec to find the correct asymptotic order of the Sidon constant of Dirichlet polynomials of lenght $x$. In \cite{KONQUEFF} they proved the following:
there exists a constant $\beta > 0$, such that for every Dirichlet polynomial $\sum_{n=1}^x a_n n^{-s}$,
\begin{equation}
    \label{beginning}
    \sum_{n=1}^x \abs[]{ a_n }
    \leq \sqrt x \exp\left(
        \big( -\beta + o(1) \big)
        \sqrt{\log x \log\log x}
    \right)
        \sup_{t\in\mathbb R}
        \abs[\Big]{ \sum_{n=1}^x a_n n^{it} }\,.
\end{equation}
This was improved in \cite{DLB08}, where an improved  lower bound on $\beta$ was given, and in \cite{DFOOS} where the precise value $\beta = \tfrac{1}{\sqrt 2}$ was determined.

It turns out that \eqref{beginning} is linked to our subject by the Bohr point of view. Indeed, define for each $x$ the index set $J(x):=\left\{ \bj \in \mathcal{J} : p_\bj\leq x \right\}$. Then to each  Dirichlet polynomial
$$D(s)=\sum_{n=1}^x a_n n^{-s}=\sum_{\bj \in J(x)}a_{p_\bj}p_\bj^{-s}\,,$$
 we can associate a polynomial
 $$P(z)=\sum_{\bj \in J(x)}a_{p_\bj}z_\bj  \in \mathcal{P}(^J \ell_\infty) \,.$$
 Kronecker's theorem ensures that $\|P\|_\infty=\sup_{t\in\mathbb R}|D(it)|$, and the result of \cite[Theorem 3]{DFOOS}  translates into the following remarkable equality   (the improvement of \eqref{beginning}  mentioned above):
\begin{equation}\label{EQKQ1}
    \chimon \big( \mathcal{P}(^{J(x)}\ell_\infty)\big)
    = \sqrt{ x }
        \exp\left(
            \Big( - \frac {1}{\sqrt 2} + o(1) \Big)
            \sqrt{\log x\log \log x}
        \right)\,;
\end{equation}
in other terms, the latter expression gives the precise asymptotic order of the Sidon constant $S(x)$ for the characters $z_\bj\,, \bj \in J(x)$ on the group $\mathbb{T}^\infty$.

There is also an $m$-homogeneous version of \eqref{EQKQ1} due to Balasubramanian, Calado and Queffel\'{e}c  \cite{BaCaQu06} with an original  formulation analog to \eqref{beginning}. We reformulate it as follows: Define for $m$ the index set $J(x,m):=\left\{ \bj \in \mathcal{J}(m) : p_\bj\leq x \right\}$. Then with constants
only depending on $m$
\begin{equation}\label{BCQ}
\chimon \big( \mathcal{P}(^{J(x,m)}\ell_\infty)\big) \asymp \frac{x^{\frac{m-1}{2m}}}{\left(\log x\right)^{\frac{m-1}{2}}}\,.
\end{equation}

The following two theorems extend these results to the scale of $\ell_r$-spaces, and more. The original proofs
of \eqref{EQKQ1} and \eqref{BCQ} are heavily based on the Bohnen\-blust-Hille inequality and its recent improvements. Here we need Theorem~\ref{THMMONOMIAL} as a substitute. Part (1) of the first theorem
obviously extends the upper estimate from \eqref{EQKQ1} to the scale of $\ell_r$-spaces, part (2) even modifies the index set $J(x,m)$ (so far defined via  primes).
Both results are  of particular interest for our study of sets of monomial convergence in the next section.
\begin{theorem} \label{unc-basis-q}
    Let $1 \le r \le \infty$ and set  $\sigma = 1 - \tfrac{1}{\min \{ r,2 \}}$. Then for every  $f
     \in H_\infty(B_{\ell_r})$, every $u \in B_{\ell_r}$, and every  $x>e$ we have
    \begin{enumerate}
        \item[(1)] for $p$ denoting the sequence of primes,
            \begin{equation*}
                \sum_{\bj:p_\bj \le x}  \left|c_\bj(f) u_\bj \right|
                \le x^\sigma \exp \Big(
                        \big( - \sqrt{2} \sigma + o(1) \big) \sqrt{\log x\log \log x}
                    \Big)
                    \,
                    \norm{f}_\infty\,.
            \end{equation*}

        \item[(2)] for $q = (q_k)_k$, defined by $q_k = k \cdot \big( \log (k+2) \big)^\theta$ with some $\theta \in (\tfrac{1}{2},1]$,
            \begin{equation*}
                \sum_{\bj:q_\bj \le x} \left|c_\bj(f) u_\bj\right|
                \le x^\sigma \exp \Big(
                        \big( - 2 \sigma \sqrt{\theta -\frac{1}{2}} + o(1) \big) \sqrt{\log x\log \log x}
                    \Big)
                    \,
                    \norm{f}_\infty\,.
            \end{equation*}

               \end{enumerate}
    \noindent
    In both cases, the $o$-term depends  neither on $x$ nor on $f$.
\end{theorem}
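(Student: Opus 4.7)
The plan is to adapt the Konyagin--Queff\'elec decomposition strategy (originally designed for $\ell_\infty$ via the Bohnenblust--Hille inequality), using Theorem~\ref{THMMONOMIAL} as the $\ell_r$-substitute. First I would split $f$ into its homogeneous Taylor components, $f = \sum_{m \ge 0} P_m$, where $P_m(z) = \frac{1}{2\pi}\int_0^{2\pi} f(e^{i\theta} z) e^{-im\theta}\, d\theta \in \mathcal{P}(^m \ell_r)$ satisfies $\|P_m\|_\infty \le \|f\|_\infty$ (since $B_{\ell_r}$ is a Reinhardt domain) and carries the same monomial coefficients as $f$ on $\mathcal{J}(m)$. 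Writing $J_m(x) = \{\bj \in \mathcal{J}(m): p_\bj \le x\}$ (resp.\ with $q$), Theorem~\ref{THMMONOMIAL} applied to $P_m$ on the index set $J_m(x)$ gives, for $u \in B_{\ell_r}$,
\begin{equation*}
    \sum_{\bj \in J_m(x)} |c_\bj(f) u_\bj| \le C(m,r)\, |J_m(x)^*|^\sigma \|u\|_r^m \|P_m\|_\infty \le C(m,r)\, |J_m(x)^*|^\sigma \|f\|_\infty.
\end{equation*}
Everything thus reduces to estimating $\sum_m C(m,r)\,|J_m(x)^*|^\sigma$, where $C(m,r) \le em K^m$ for some $K = K(r)$.

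For the combinatorial input, I would observe that $\bj \in J_m(x)^*$ forces $p_\bj \cdot p_{j_{m-1}} \le x$ (the cheapest extension has $k = j_{m-1}$), and since $p_{j_{m-1}} \ge p_\bj^{1/(m-1)}$ this implies $p_\bj \le x^{(m-1)/m}$. Hence $|J_m(x)^*|$ is bounded by the number of ordered $(m-1)$-tuples of primes whose product is at most $x^{(m-1)/m}$, which Landau's theorem controls by
\begin{equation*}
    |J_m(x)^*| \lesssim \frac{x^{(m-1)/m} (\log \log x)^{m-2}}{(m-2)!\, \log x},
\end{equation*}
uniformly in $m$, and the sum is finite since $2^{m-1} \le p_\bj \le x$ forces $m = O(\log x)$. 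For part~(2) I would use the analog of Landau's theorem for the sequence $q$, whose counting function satisfies $\#\{k : q_k \le t\} \sim t/(\log t)^\theta$, yielding a bound of the same form but with $(\log x)^{-\theta}$ in place of $(\log x)^{-1}$.

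Setting $L = \log x$ and $L_2 = \log\log x$, the task is then to bound
\begin{equation*}
    x^\sigma \sum_m em K^m x^{-\sigma/m} \frac{L_2^{\sigma(m-2)}}{((m-2)!)^\sigma L^\sigma}\,.
\end{equation*}
A Stirling reduction collapses the logarithm of the $m$-th term to $-\sigma L/m + \sigma m \log(eL_2/m) + O(m)$, and testing $m = \alpha \sqrt{L/L_2}$ yields $-\sigma \sqrt{L L_2}\,(1/\alpha + \alpha/2) + o(\sqrt{L L_2})$. The function $\alpha \mapsto 1/\alpha + \alpha/2$ attains its minimum $\sqrt 2$ at $\alpha = \sqrt 2$, so at the saddle point $m^\star \sim \sqrt{2L/L_2}$ the peak value is $\exp(-\sqrt 2\, \sigma \sqrt{L L_2}(1+o(1)))$, and polynomial-width Gaussian corrections around this peak are absorbed into the $o(1)$; this gives part~(1). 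Part~(2) runs in parallel, the only substantive change being that the modified Landau estimate rescales the balance inside the logarithm, and the analogous optimization produces the coefficient $-2\sigma \sqrt{\theta - 1/2}$. The main obstacle is precisely this last saddle-point analysis: since $C(m,r)$ grows exponentially in $m$ while $|J_m(x)^*|^\sigma$ decays only sub-exponentially, the sharp coefficient in the exponent is recovered only after all the constants are tracked against the Landau counting function; in part~(2) the correct Landau-type asymptotic for products from $(q_k)$ (where the threshold $\theta > 1/2$ enters naturally through the divergence of $\sum 1/q_k$) is the key new technical input.
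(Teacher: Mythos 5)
Your strategy---decompose $f$ into homogeneous pieces, apply Theorem~\ref{THMMONOMIAL} directly to each $J(x,m)$, feed in a uniform Landau/Hardy--Ramanujan bound with factorial decay, and then run a saddle-point optimization---is genuinely different from the paper's argument, which instead uses the Konyagin--Queff\'elec splitting $u = u^- + u^+$ at a tunable threshold index $l$ (with $q_l \le y < q_{l+1}$), together with the Rankin-type estimate of Lemma~\ref{thm:J+_theta} on the tail set $J^+(x,m;y)$ and the combinatorial count $|J^-(x;y)| \le (1 + \log x/\log q_1)^l$. This difference is not cosmetic, and unfortunately your version contains a gap at the exact point where the two arguments diverge.

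The problem is the claimed estimate
\begin{equation*}
    |J(x,m)^*| \lesssim \frac{x^{(m-1)/m}(\log\log x)^{m-2}}{(m-2)!\,\log x} \qquad\text{uniformly in } m.
\end{equation*}
Since $\bj \in J(t,m-1)$ corresponds bijectively to an integer $n = p_\bj \le t$ with $\Omega(n) = m-1$ (prime factors counted \emph{with multiplicity}), this is the uniform Hardy--Ramanujan bound for $\Omega$, and that bound fails precisely in the regime you need. It is classical (Sathe--Selberg, Nicolas) that the factorial asymptotic for $\#\{n\le t : \Omega(n)=k\}$ breaks down once $k > (2-\delta)\log\log t$, because of the pole at $z=2$ of $\prod_p(1-z/p)^{-1}$: for such $k$ the count is dominated by integers heavily divisible by $2$, and one has $\#\{n\le t : \Omega(n)=k\} \ge \pi(t/2^{k-1}) \gg t/(2^k\log t)$, which vastly exceeds $\tfrac{t(\log\log t)^{k-1}}{(k-1)!\log t}$ as soon as $k \gg \log\log t$. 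Your saddle point sits at $m \sim \sqrt{\log x/\log\log x}$, and since $\sqrt{\log x/\log\log x}/\log\log x \to \infty$, you are deep inside the bad regime. Worse, the true decay $\sim 2^{-(m-1)}$ is \emph{not} fast enough to beat the constant $C(m,r) \sim K^m$ from Theorem~\ref{THMMONOMIAL} (e.g.\ $K = e^{1/r} > 2^{1-1/r}$ for $r$ near $1$), so the $m$-sum is not even under control. This is exactly why the paper insists on the $y$-threshold: by restricting to indices $> l$, the Rankin trick becomes admissible with an arbitrary parameter $y = y(x) \to \infty$, yielding $y^{-m}$-decay whose rate $\log y \sim \tfrac{1}{2}\log\log x$ grows and therefore dominates $\log C(m,r)$. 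That tunable decay rate, which your direct approach (limited to $y < q_1$) cannot access, is what makes the optimization close. For part~(2) the gap is compounded: the claimed counting bound of "the same form with $(\log x)^{-\theta}$ in place of $(\log x)^{-1}$" is incorrect even in shape, since for $\theta < 1$ the quantity $\sum_{q_k\le t} q_k^{-1}$ behaves like $g_\theta(t) = (1-\theta)^{-1}(\log t)^{1-\theta} \gg \log\log t$, so the $(\log\log x)^{m-2}$ factor should be $g_\theta(x)^{m-2}$, and establishing a factorial-type uniform bound for the $q$-sequence would be a substantial additional undertaking that the paper deliberately sidesteps via Lemma~\ref{thm:J+_theta}.
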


\noindent
The second theorem extends \eqref{BCQ} to the scale of $\ell_r$-spaces.

\begin{theorem}\label{unc-basis-pol}
Let $1\leq r\leq \infty$ and $m\geq 1$. Then there exists $C(m,r)>0$ such that for all $P \in \mathcal{P}(^m\ell_r)$,  all $x\geq 3$,  and all $u\in \ell_r$ we have
\begin{itemize}
\item[(1)] in the case $1 \leq r \leq 2$:
$$\sum_{\bj:p_\bj\leq x}\left|c_\bj(P) u_\bj\right|\leq C(m,r)\frac{x^{\frac{m-1}{m}(1 -\frac{1}{r})}\left(\log \log x\right)^{(m-1)(1 -\frac{1}{r})} }{(\log x)^{(1 -\frac{1}{r})}}
\|u\|_r^m\|P\|_\infty\,,
$$
\item[(2)]
and in the case $2 \leq r \leq \infty$:
$$\sum_{\bj:p_\bj\leq x}
\left|c_\bj(P) u_\bj\right|\leq C(m,r)\frac{x^{\frac{m-1}{2m}}}{\left(\log x\right)^{\frac{m-1}{2}}}
  \|u\|_r^m\|P\|_\infty \,.
$$
\end{itemize}
\end{theorem}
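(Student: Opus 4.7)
My plan is to derive both estimates as direct consequences of Theorem~\ref{THMMONOMIAL} applied to the index set $J(x,m) = \{\bj \in \mathcal{J}(m): p_\bj \leq x\}$, together with a sharp combinatorial bound on its reduced set $J(x,m)^*$. Unpacking the definition, $\bi \in J(x,m)^*$ iff some $k \geq i_{m-1}$ satisfies $p_\bi p_k \leq x$; the smallest admissible extension is $k = i_{m-1}$, giving
\[
J(x,m)^* = \big\{(i_1 \leq \cdots \leq i_{m-1}) : p_{i_1}\cdots p_{i_{m-2}}\,p_{i_{m-1}}^2 \leq x\big\}.
\]
The doubled prime $p_{i_{m-1}}^2$ is the crucial feature of this characterisation: it forces the largest prime in $\bi$ to be at most $\sqrt x$, and (more importantly) it collapses the naïve count $|J(x,m-1)|\sim x(\log\log x)^{m-2}/\log x$ down to polynomial-root order in $x$.

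The central combinatorial step is to prove
\[
|J(x,m)^*| \leq C_m\, \frac{x^{(m-1)/m}(\log\log x)^{m-1}}{\log x}.
\]
Parametrising by $k = i_{m-1}$ and noting $p_k \leq \sqrt x$, I would split according to whether $p_k \leq x^{1/m}$ or $p_k > x^{1/m}$. In the first regime the constraint $p_{i_1}\cdots p_{i_{m-2}} \leq x/p_k^2$ is automatic (since $p_{i_j}\leq p_k$ forces $p_{i_1}\cdots p_{i_{m-2}}\leq p_k^{m-2}\leq x^{(m-2)/m}$), so the inner count collapses to $\binom{k+m-3}{m-2}\lesssim k^{m-2}/(m-2)!$; summing over $k$ via the prime number theorem yields $\pi(x^{1/m})^{m-1}/(m-1)!\lesssim x^{(m-1)/m}/(\log x)^{m-1}$. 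In the second regime I would invoke the Hardy--Ramanujan--Landau estimate for integers $\leq x/p_k^2$ with $m-2$ prime factors, together with Mertens' estimate $\sum_{p>x^{1/m}}p^{-2}\lesssim 1/(x^{1/m}\log x)$, to bound the contribution by $\lesssim x^{(m-1)/m}(\log\log x)^{m-3}/(\log x)^2$. Both regimes are dominated by the claimed bound.

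For case (1) ($1\leq r\leq 2$), Theorem~\ref{THMMONOMIAL} applied with $J = J(x,m)$ gives
\[
\sum_{p_\bj \leq x}|c_\bj(P)||u_\bj| \leq C(m,r)\,|J(x,m)^*|^{1-1/r}\,\|u\|_r^m\,\|P\|_\infty,
\]
and inserting the arithmetic bound converts the factor $|J^*|^{1-1/r}$ exactly into the target $\dfrac{x^{(m-1)(1-1/r)/m}(\log\log x)^{(m-1)(1-1/r)}}{(\log x)^{1-1/r}}$. For case (2) ($2\leq r\leq\infty$), I would appeal to Lemma~\ref{lem:the_trick_poly} to pass from $\ell_r$ to $\ell_\infty$, reducing the sum to $\chimon\big(\mathcal P(^{J(x,m)}\ell_\infty)\big)\,\|u\|_r^m\,\|P\|_\infty$; this Sidon-type quantity is exactly the Balasubramanian--Calado--Queffélec expression \eqref{BCQ}, yielding the claimed bound $x^{(m-1)/(2m)}/(\log x)^{(m-1)/2}$.

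The main technical obstacle is the arithmetic bound on $|J(x,m)^*|$. The straightforward inclusion $|J(x,m)^*|\subset J(x,m-1)$ loses a factor $x^{1/m}$ and is insufficient; exploiting the doubled-prime characterisation via parametrisation by the largest index, and carefully balancing the regime $p_k\leq x^{1/m}$ (dominated by the prime number theorem applied to a purely combinatorial count) against $p_k>x^{1/m}$ (dominated by Hardy--Ramanujan combined with Mertens), is the crucial step.
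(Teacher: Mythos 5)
Your overall strategy coincides with the paper's: part (2) is handled identically (Lemma~\ref{lem:the_trick_poly} plus \eqref{BCQ}), and for part (1) you, like the paper, apply Theorem~\ref{THMMONOMIAL} to $J = J(x,m)$ and then bound $|J(x,m)^*|$. The difference lies entirely in how that cardinality bound is obtained. The paper proceeds in two clean, modular steps: Lemma~\ref{lem:size_j+_reduced} gives the inclusion $J(x,m)^* \subset J(x^{(m-1)/m},m-1)$, and Lemma~\ref{sizejsets} (Landau) gives $|J(y,m-1)| \lesssim_m \tfrac{y}{\log y}(\log\log y)^{m-2}$; plugging in $y = x^{(m-1)/m}$ yields $|J(x,m)^*| \lesssim_m x^{(m-1)/m}(\log\log x)^{m-2}/\log x$. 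You instead exploit the explicit characterization $J(x,m)^* = \{i_1\le\cdots\le i_{m-1} : p_{i_1}\cdots p_{i_{m-2}}p_{i_{m-1}}^2\le x\}$ and carry out a direct two-regime count over the largest index, using the prime number theorem in the small-$p_{i_{m-1}}$ regime and Landau plus the tail sum $\sum_{p>x^{1/m}}p^{-2}\asymp \tfrac{1}{x^{1/m}\log x}$ in the large-$p_{i_{m-1}}$ regime. This is essentially an unwinding of the paper's two lemmas into a single computation, and it is correct; note that your split also recovers (and in fact slightly sharpens) the exponent on $\log\log x$, which is immaterial since the theorem only claims the weaker $(m-1)$. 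One small caveat: your regime-2 exponent of $(\log x)^2$ in the denominator looks a bit optimistic since $\log(x/p_k^2)$ is not uniformly of size $\log x$ near $p_k\approx\sqrt{x}$; a more careful accounting gives only $(\log x)^1$ there, but this still lies below the target bound, so the conclusion stands. The paper's route is more economical because it reuses the already-proved Lemma~\ref{lem:size_j+_reduced} (needed anyway for Theorem~\ref{unc-basis-q}) rather than reconstructing its content by hand.
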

Clearly all these results have reformulations in terms of unconditional basis constants.
For example, part (1) of Theorem \ref{unc-basis-q}  reads:
\[
\chimon \big( \mathcal{P}(^{J(x)}\ell_r)\big) \leq
x^\sigma \exp \Big(
                        \big( - \sqrt{2} \sigma + o(1) \big) \sqrt{\log x\log \log x}
                    \Big)\,.
\]
The proofs will be given in \eqref{proofproof}; the next section prepares them.

\subsection{Size of some index sets} \label{sizesizesize}
Although we stated Theorem \ref{unc-basis-q} for the sequence of primes $p$ in part (1) and for a specific choice of $q$ in part (2) we want to state our considerations below as generic as possible. Let hereinafter $q = (q_k)_k$ denote a strictly increasing sequence with $q_1 > 1$ and $q_k \to \infty$ for $k \to \infty$. For technical reasons we have to introduce the index of length zero $\vartheta = (\ )$, for which $q_\vartheta = 1$ and $(\bi, \vartheta) = (\vartheta, \bi) = \bi$ by convention. Let $x > 2$ and $2 < y < x$. Choose $l \in \N$, such that $q_l \le y < q_{l+1}$. We define
\begin{align*}
J(x)&:=\big\{\bj \in\mathcal J\,\big\vert\, q_{\bj}\le x\big\}
    \cup \{ \vartheta \} \\
    J^-(x;y) &:= \big\{ \bj = (j_1, \dotsc, j_k) \in\mathcal J(k) \,\big\vert\, k \in \N, q_\bj \le x, j_k \le l \big\}  \cup \{ \vartheta \}
\intertext{and for $m\in\N$,}
J(x,m)&:= \big\{ \bj = (j_1, \dotsc, j_m) \in \mathcal J(m) \,\big\vert\, q_\bj \le x\big\}\\
    J^+(x, m;y) &:= \big\{ \bj = (j_1, \dotsc, j_m) \in 
    \mathcal J(x,m) \,\big\vert\, l < j_1
    \big\},
\end{align*}
respectively for $m = 0$, $J^+(x,0;y) := \{ \vartheta \}$.

From the general construction of these sets we can already say something about their size -- we need five lemmas.
\begin{lemma}
    \label{lem:size_j-}
    Let $2<y<x$ and $m \in \mathbb{N}$.
    \begin{itemize}
    \item[(1)]
        $\abs{ J^-(x;y) } \le \Big( 1 + \frac{\log x}{\log q_1} \Big)^l$
     \item[(2)]
        $\abs{ J(x, m) } = \emptyset$ whenever $m >  \frac{\log x}{\log q_1}.$
    \end{itemize}
\end{lemma}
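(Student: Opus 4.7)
My plan is to translate both statements into the multi-index language $\Lambda(m,n) \leftrightarrow \mathcal{J}(m,n)$ introduced in the Preliminaries, where a tuple $\bj = (j_1 \le \cdots \le j_k)$ is encoded by $\alpha \in \mathbb{N}_0^{(\mathbb{N})}$ with $\alpha_i = |\{q : j_q = i\}|$, so that $q_\bj = \prod_i q_i^{\alpha_i}$. Throughout I will exploit that $q$ is strictly increasing with $q_1 > 1$, hence $q_i \ge q_1 > 1$ for all $i$, and thus $\log q_i \ge \log q_1 > 0$.

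For part (1), I would start by observing that every $\bj \in J^{-}(x;y)$, including $\vartheta$ (which corresponds to $\alpha = 0$), is uniquely encoded by some $\alpha \in \mathbb{N}_0^l$ satisfying $\prod_{i=1}^{l} q_i^{\alpha_i} = q_\bj \le x$. Since each factor $q_i^{\alpha_j}$ with $j \ne i$ is at least $1$, I would isolate the $i$-th factor to obtain $q_i^{\alpha_i} \le x$, which gives $\alpha_i \le \log x/\log q_i \le \log x/\log q_1$ for every $i \in \{1, \dots, l\}$. Hence each coordinate $\alpha_i$ takes at most $\lfloor \log x/\log q_1 \rfloor + 1 \le 1 + \log x/\log q_1$ values, and the product bound $(1 + \log x/\log q_1)^l$ follows by multiplication over the $l$ independent coordinates.

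For part (2), I read the statement as asserting $J(x,m) = \emptyset$ (the notation $|J(x,m)| = \emptyset$ being a minor glitch). The argument is a one-liner: if some $\bj = (j_1,\dots,j_m) \in \mathcal{J}(m)$ satisfies $q_\bj \le x$, then since every factor satisfies $q_{j_i} \ge q_1$, one has $q_1^m \le q_{j_1}\cdots q_{j_m} = q_\bj \le x$. Taking logarithms yields $m \le \log x/\log q_1$, contradicting the hypothesis $m > \log x/\log q_1$.

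There is really no serious obstacle here; the only subtlety is making sure the bookkeeping about the empty index $\vartheta$ is consistent (it corresponds to $\alpha = 0$ and contributes exactly one of the $(1 + \log x/\log q_1)^l$ possibilities), and reading the slight notational slip in part (2) as an emptiness claim rather than a cardinality claim.
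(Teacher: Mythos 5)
Your proof is correct and takes essentially the same approach as the paper: identify $J^-(x;y)$ with multi-indices $\alpha \in \mathbb{N}_0^l$ satisfying $\prod_i q_i^{\alpha_i} \le x$, bound each coordinate by $\log x / \log q_1$, and count; for (2), note $q_1^m \le q_\bj \le x$. The only cosmetic difference is that the paper bounds $q_1^{\alpha_j}$ by the full product while you bound $q_j^{\alpha_j}$ and then use $q_j \ge q_1$, but these are the same inequality.
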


\begin{proof}
(1) Using the correspondence between $\mathcal J(m)$ and $\Lambda(m)$, $J^-(x;y)$ has the same cardinal number as
$$\Gamma^-(x;y):=\left\{\alpha\in\N_0^l \,\big\vert\,q_1^{\alpha_1}\cdots q_l^{\alpha_l}\leq x\right\}.$$
Now, for $\alpha\in\Gamma^-(x;y)$ and $1\leq j\leq l$,
$$q_1^{\alpha_j}\leq q_1^{\alpha_1}\cdots q_l^{\alpha_l}\le x,$$
so that $\alpha_j\leq\frac{\log x}{\log q_1}$ for all $j$. (2) Note that for every $\bj \in J^+(x, m;y)$ we have $q_1^m \leq q_\bj \leq x$ which immediately gives the conclusion.
\end{proof}

The next lemma relates the size of an index set with the size of its  reduced set.

\begin{lemma}
    \label{lem:size_j+_reduced}
    For the reduced index sets,
    \begin{equation*}
        J(x,m)^\ast \subset J\big( x^\frac{m-1}{m}, m-1 \big)
        \quad\text{ and }\quad
        J^+(x,m;y)^\ast \subset J^+\big( x^\frac{m-1}{m}, m-1;y \big).
    \end{equation*}
\end{lemma}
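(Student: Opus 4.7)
The plan is to exploit the fact that the sequence $q$ is strictly increasing, so that for an ordered index $(j_1,\ldots,j_{m-1},k)\in\mathcal{J}(m)$ the last entry $q_k$ dominates all the earlier entries. This will give a clean AM--GM-type lower bound on $q_k$ in terms of $q_\bj$, from which the exponent $(m-1)/m$ emerges naturally.

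Concretely, I would start from the first inclusion. Fix $\bj=(j_1,\ldots,j_{m-1})\in J(x,m)^\ast$, so by definition of the reduced set there exists $k\geq 1$ with $(\bj,k)\in J(x,m)$. Unpacking, this means $(\bj,k)\in\mathcal{J}(m)$, i.e.\ $j_1\leq\cdots\leq j_{m-1}\leq k$, together with $q_{(\bj,k)}=q_\bj\,q_k\leq x$. Since $q$ is strictly increasing and $j_i\leq k$ for every $i=1,\ldots,m-1$, we have $q_{j_i}\leq q_k$ and therefore
\begin{equation*}
q_\bj=\prod_{i=1}^{m-1}q_{j_i}\leq q_k^{m-1},
\qquad\text{so}\qquad
q_k\geq q_\bj^{1/(m-1)}.
\end{equation*}
Plugging this into $q_\bj\,q_k\leq x$ yields $q_\bj^{m/(m-1)}\leq x$, hence $q_\bj\leq x^{(m-1)/m}$. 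This is precisely the statement that $\bj\in J\bigl(x^{(m-1)/m},m-1\bigr)$.

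For the second inclusion I would run exactly the same argument with the additional constraint imposed by $J^+$. If $\bj\in J^+(x,m;y)^\ast$, then there is $k$ with $(\bj,k)\in J^+(x,m;y)$, so in addition to $(\bj,k)\in J(x,m)$ we have $l<j_1$. The computation above still yields $q_\bj\leq x^{(m-1)/m}$, and since the first coordinate of $\bj$ is the same $j_1$ as the first coordinate of $(\bj,k)$, the condition $l<j_1$ is preserved; hence $\bj\in J^+\bigl(x^{(m-1)/m},m-1;y\bigr)$.

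There is no real obstacle here: the statement is a purely combinatorial consequence of monotonicity of $q$ together with the definition of $J^\ast$. The only thing to be careful about is handling the degenerate conventions (the empty index $\vartheta$ and the cases $m=1$), but these are immediate—when $m=1$ the reduced set is either empty or $\{\vartheta\}$, and the inclusion holds trivially since $q_\vartheta=1\leq x^{0}$.
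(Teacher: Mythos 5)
Your proof is correct and takes essentially the same approach as the paper: both rest on the two facts that $q_{\bj}q_k = q_{(\bj,k)} \le x$ and that monotonicity of $q$ gives $q_{j_i} \le q_k$ for all $i$. The paper reaches $q_{\bj} \le x^{(m-1)/m}$ by a case split on whether $q_k \le x^{1/m}$, whereas you eliminate $q_k$ directly via $q_k \ge q_{\bj}^{1/(m-1)}$; these are just two phrasings of the same estimate.
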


\begin{proof}
    Let $\bj = (j_1, \dotsc, j_{m-1}) \in J(x,m)^{*}$, respectively 
    $\bj \in J^+(x,m;y)^{*}$. Then there exists $k \ge j_{m-1}$ such that  $(\bj, k) \in J(x,m)$, respectively $(\bj, k) \in J^+(x,m;y)$. Hence $q_{\bj} \cdot q_{k} = q_{(\bj, k)} \le x$. Since $q_k \ge q_{j_{m-1}}$, this implies either $q_k > x^{\frac{1}{m}}$ or $q_{j_1} \le \dotsc \le q_{j_{m-1}} \le q_k \le x^\frac{1}{m}$. In both cases, $q_{j_1} \dotsb q_{j_{m-1}} \le x^\frac{m-1}{m}$.
\end{proof}

\noindent
For specific choices of $q$ we can say the following about the size of $J^+(x,m;y)$:
\begin{lemma}
    \label{thm:J+_theta}
    Let $q = (q_k)_k$ be defined by $q_k = k \cdot \big( \log (k+2) \big)^\theta$ for some $\theta \in (0,1]$. Then there exists a constant $c > 0$, such that for every $x > y > 2$ and every $m\in\N$,
    \begin{equation*}
        \abs{ J^+(x,m;y) } \le x y^{-m} \exp\Big( y \cdot \big( g_\theta(x) + c \big) \Big)
    \end{equation*}
    where $g_\theta(x) = \frac1{1-\theta}(\log x)^{1-\theta}$ for $\theta < 1$ and $g_\theta(x) = \log \log x$ for $\theta = 1$.
\end{lemma}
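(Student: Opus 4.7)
The plan is to apply Rankin's trick in conjunction with an Euler-type generating-function identity. Since $q_\bj \leq x$ forces $q_{j_m} \leq x$, hence $j_m \leq N(x) := \max\{k : q_k \leq x\}$, and since $1 \leq x/q_\bj$ on the range where $q_\bj \leq x$, I first reduce to
\[
|J^+(x,m;y)| \leq x \sum_{\substack{\bj \in \mathcal J(m) \\ j_1 > l,\, j_m \leq N(x)}} q_\bj^{-1}.
\]
The identity
\[
\sum_{m \geq 0} z^m \sum_{\substack{\bj \in \mathcal J(m) \\ j_1 > l,\, j_m \leq N(x)}} q_\bj^{-1} = \prod_{l < k \leq N(x)} \frac{1}{1 - z/q_k}, \qquad 0 \leq z < q_{l+1},
\]
follows by expanding each factor as a geometric series and matching multisets of size $m$ from $(l, N(x)]$ with multi-indices $\alpha$ of sum $m$. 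As every coefficient in this power series is non-negative, extracting the $m$-th coefficient and setting $z = y$ (admissible since $y < q_{l+1}$) yields
\[
|J^+(x,m;y)| \leq x\,y^{-m} \prod_{l < k \leq N(x)} \frac{1}{1 - y/q_k}.
\]

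The remaining work is the analytic estimate $\sum_{l < k \leq N(x)} -\log(1 - y/q_k) \leq y(g_\theta(x) + c)$. I would split the sum at the threshold $q_k = 2y$. In the ``bulk'' regime $q_k \geq 2y$, the bound $-\log(1 - y/q_k) \leq 2y/q_k$ together with integral comparison against $\int dt/(t(\log t)^\theta)$ on $[l, N(x)]$ yields a contribution of at most $2y\bigl(g_\theta(x) + O(1)\bigr)$. In the ``transition'' regime $y < q_k < 2y$, containing only $\lesssim y/(\log y)^\theta$ indices, the explicit spacing estimate $q_k - q_{l+1} \asymp (k-l-1)(\log l)^\theta$ lets me bound each $\log(q_k/(q_k - y))$ and sum via a harmonic-sum estimate to a total of $O(y)$.

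The main obstacle is the factor at $k = l+1$, which becomes singular when $y$ approaches $q_{l+1}$. To circumvent this I would exploit that $|J^+(x,m;y)|$ depends on $y$ only through $l$: applying the generating-function bound at $z = q_l$ (safely bounded away from $q_{l+1}$) yields the intermediate estimate $|J^+(x,m;y)| \leq x\,q_l^{-m}\exp(q_l(g_\theta(x) + c))$, and I'd then transfer this to the claimed bound at general $y \in [q_l, q_{l+1})$ via the elementary inequality $m\log(y/q_l) \leq (y-q_l)(g_\theta(x)+c)$, which follows from $\log(y/q_l) \leq (y-q_l)/q_l$ and holds in the main regime $m \leq q_l(g_\theta(x)+c)$. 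In the complementary regime the constraint $q_{l+1}^m \leq x$ is so binding that $|J^+|$ is controlled by a direct combinatorial count of the multisets close to the extreme tuple $(l+1, l+1, \ldots, l+1)$, which easily satisfies the claimed estimate.
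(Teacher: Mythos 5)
Your Rankin/generating-function reduction is identical to the paper's; where you part ways is in estimating $\sum_{l<k\le N(x)} -\log\big(1-y/q_k\big)$. The paper does this in a single line using the super-additivity $q_{a+b}\ge q_a+q_b$ (its eq.~(4.3)) to reindex $\frac{y}{q_k-y}\le\frac{y}{q_{k-l}}$ and then sums against eq.~(4.4). Your regime split is considerably more elaborate, and two of its steps do not currently close. In the bulk regime you use $-\log(1-t)\le 2t$, which produces $2y\,g_\theta(x)$ rather than $y\,g_\theta(x)$; since $g_\theta(x)\to\infty$, the factor $2$ cannot be absorbed into the additive constant $c$, so as written this does not yield the stated bound. (The fix is cheap: write $-\log(1-t)\le t+\tfrac{t^2}{1-t}$ and observe $y^2\sum_{q_k\ge 2y}q_k^{-2}=O(y)$ by the same integral comparison, so the quadratic contribution lands in $c$.)

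More seriously, your handling of the $k=l+1$ factor is incomplete. You correctly flag it as the weak spot — and indeed the paper's own inequality $q_k-y\ge q_{k-l}$, read literally, is doubtful at $k=l+1$ when $q_l<y<q_{l+1}$, so the point merits care — but your repair only covers the regime $m\le q_l\big(g_\theta(x)+c\big)$, where the monotonicity of $z\mapsto z^{-m}e^{zG}$ lets you transfer the Rankin estimate from $z=q_l$ to $z=y$. In the complementary regime you assert that a ``direct combinatorial count'' of the multi-indices near $(l+1,\dots,l+1)$ ``easily'' gives the claimed estimate, but no count and no inequality are given, and it is not obvious: there $q_l^{-m}$ and $y^{-m}$ can differ by a factor of order $(q_{l+1}/q_l)^m\approx e^{m/l}$, so the $z=q_l$ Rankin bound overshoots the target $xy^{-m}\exp(y(g_\theta(x)+c))$ and a genuinely new estimate of $\abs{J^+(x,m;y)}$ is required. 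Until that case is actually carried out, the proposal has a hole.
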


\begin{proof}
    From the definition of the series $q$, we see immediately
    \begin{equation}
        \label{eq:q_diff}
        q_{l+k} - q_l \ge q_k
    \end{equation}
    for any $k \in\N$. We have furthermore for $c = q_1^{-1} + q_2^{-1} + q_3^{-1}$,
    \begin{equation*}
        \sum_{k \le x} \frac{1}{q_k}
        \le \sum_{3 < k \le x} \frac{1}{k (\log k )^\theta} + c
        \le \int_3^x \frac{1}{t (\log t)^\theta} \mathrm{d}t + c
        = \int_{\log 3}^{\log x} \frac{1}{s^\theta} \mathrm{d}s +c
    \end{equation*}
    and therefore by integration
    \begin{equation}
        \label{eq:sum_q}
        \sum_{k \le x} \frac{1}{q_k} \le g_\theta(x) + c.
    \end{equation}

    \medskip
    \noindent
    We introduce a completely multiplicative function,
    \begin{eqnarray*}
        \abs{J^+(x,m;y)}
        &=& \sum_{\bj \in J^+(x,m;y)} 1
        \le \frac{x}{y^m} \sum_{\bj \in J^+(x,m;y)}
            \frac{y}{q_{j_1}} \dotsb \frac{y}{q_{j_m}}\\
        & \le& \frac{x}{y^m} \prod_{l < k < x} \bigg( \sum_{\nu = 1}^\infty \big( \frac{y}{q_k} \big)^\nu \bigg)
        \le \frac{x}{y^m}
            \exp \Big( - \smashoperator{\sum_{l < k < x}} \log \big( 1 - \frac{y}{q_k}\big) \Big).
    \end{eqnarray*}

    \noindent
    Using the series expansion of the logarithm around $1$, we obtain for the exponent
    \begin{align*}
        - \smashoperator{\sum_{l < k < x}} \log \big( 1 - \frac{y}{q_k}\big)
        &= \sum_{l < k < x} \sum_{\nu = 1}^\infty \frac{1}{\nu} \Big( \frac{y}{q_k}\Big)^\nu
        \le \sum_{l < k < x} \frac{y}{q_k} \frac{1}{1 - \frac{y}{q_k}}.
        \end{align*}
   With \eqref{eq:q_diff} and the fact that $y \ge q_l$, this leads to
   \begin{align*}
        - \smashoperator{\sum_{l < k < x}} \log \big( 1 - \frac{y}{q_k}\big) &
        \le y \sum_{l < k < x} \frac{1}{q_k - y}
        \le y \sum_{l < k < x} \frac{1}{q_{k - l}}
        \le y \cdot \big( \sum_{k < x} \frac{1}{q_k} \big).
    \end{align*}
    \eqref{eq:sum_q} now completes the proof.
\end{proof}

\noindent
Finally, we mention two known estimates which measure the size of $J(x,m)$ and $ J^+(x,m;y)$, in the case they are defined with respect to the sequence of primes. The first  one is taken from Balazard
\cite[ Corollaire~1]{Balazard89}, and the second  one is a well-known result of Landau (see e.g. \cite{HW} for a proof).

\begin{lemma}
    \label{thm:J+_primes}
    Let $q$ denote the sequence of primes. Then there exists a constant $c > 0$, such that for every $x > y > 2$ and every $m\in\N$,
    \begin{equation*}
        \abs{ J^+(x,m;y) } \le x y^{-m} \exp\Big( y \cdot \big( \log \log x + c \big) \Big).
    \end{equation*}
\end{lemma}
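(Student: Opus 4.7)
I would mirror the proof of Lemma \ref{thm:J+_theta} almost verbatim at $\theta=1$, substituting for the generic sequence $q_k$ the primes $p_k$. The correspondence is natural: by the prime number theorem, $p_k\sim k\log k$, which matches $q_k=k\log(k+2)$ at $\theta=1$, and this is precisely why the exponent $\log\log x$ appears in both statements.

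Step (1) is unchanged: since $p_{\bj}\le x$ for each $\bj\in J^+(x,m;y)$, we insert the factor $1\le (x/y^m)\cdot(y^m/p_{\bj})$ and distribute to obtain
\[
|J^+(x,m;y)|\;\le\;\frac{x}{y^m}\sum_{\bj\in J^+(x,m;y)}\prod_i\frac{y}{p_{j_i}}\;\le\;\frac{x}{y^m}\prod_{y<p\le x}\Bigl(1-\frac{y}{p}\Bigr)^{-1}.
\]
Step (2) also carries over verbatim, using the Taylor expansion $-\log(1-t)=\sum_{\nu\ge 1}t^\nu/\nu\le t/(1-t)$; this reduces the problem to proving that
\[
\sum_{l<k,\,p_k\le x}\frac{y}{p_k-y}\;\le\;y\bigl(\log\log x+c\bigr).
\]

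Step (3) is the only place adaptations are needed. The generic proof combines \eqref{eq:q_diff} $q_{l+k}-q_l\ge q_k$ with the summation bound \eqref{eq:sum_q}. The prime replacement for \eqref{eq:sum_q} is Mertens' theorem $\sum_{p\le x}1/p=\log\log x+O(1)$, identical in form. The prime replacement for \eqref{eq:q_diff} is the inequality $p_{l+k}\ge p_{l+1}+p_{k-1}$ (with the convention $p_0:=1$), valid for $l\ge 1,\,k\ge 1$, which one checks by direct verification for small indices and by PNT asymptotics for large ones. Combined with $y<p_{l+1}$, this gives $p_k-y>p_{k-l-1}$, whence
\[
\sum_{l<k,\,p_k\le x}\frac{y}{p_k-y}\;\le\;y\sum_{j\ge 0,\,p_j\le x}\frac{1}{p_j}\;\le\;y\bigl(\log\log x+c\bigr),
\]
the extra term $1/p_0=1$ being absorbed in the constant. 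Feeding this back into Steps (1)--(2) yields the required inequality.

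The main obstacle is the delicate contribution of the term $k=l+1$, for which $p_{l+1}-y$ can be arbitrarily small as $y$ approaches $p_{l+1}$ from below; the naive use of $-\log(1-t)\le t/(1-t)$ on this single term diverges. The refined prime-gap inequality above handles it uniformly, because it avoids invoking a lower bound on $p_{l+1}-y$ in isolation and instead packages it with a Chebyshev-type count of the primes just above $y$ (there are only $O(y/\log y)$ primes in $(y,2y)$, whose cumulative contribution is $O(y)$ by Bertrand's postulate and the $\ge 2$ prime-gap lower bound for $p\ge 3$). Alternatively, since the bound is essentially Balazard's Corollaire~1 in \cite{Balazard89}, one may simply appeal to that reference.
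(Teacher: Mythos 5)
The paper does not prove this lemma; it is quoted directly from Balazard \cite[Corollaire~1]{Balazard89} (see the sentence preceding its statement), so your fallback at the very end---appeal to that reference---is exactly what the paper does. The self-contained re-derivation you offer as the primary route, however, has a genuine gap in Step (3). The claimed inequality $p_{l+k}\ge p_{l+1}+p_{k-1}$ (with $p_0:=1$) is already false for $k=1$: it reads $p_{l+1}\ge p_{l+1}+1$. And $k=1$, i.e.\ the term $\tfrac{1}{p_{l+1}-y}$, is precisely the one you need to tame: your derived bound $p_k-y>p_{k-l-1}$ asserts at $k=l+1$ that $p_{l+1}-y>p_0=1$, while $y$ ranges over all of $[p_l,p_{l+1})$ and can lie within distance $1$ of $p_{l+1}$. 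So the ``main obstacle'' you correctly identify is not resolved by the proposed prime-gap inequality; it is restated. (For $k\ge 2$ the family $p_{l+k}\ge p_{l+1}+p_{k-1}$ is in the territory of the Hardy--Littlewood second conjecture---subadditivity of $\pi$---which is unproved and conjecturally false, so ``direct verification plus PNT asymptotics'' is not an available route.)

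The trouble is in fact upstream of any particular prime-gap inequality: the Rankin bound of Steps (1)--(2), $|J^+(x,m;y)|\le \tfrac{x}{y^m}\prod_{y<p\le x}(1-y/p)^{-1}$, already loses too much, since the single factor $(1-y/p_{l+1})^{-1}$ blows up as $y\uparrow p_{l+1}$, whereas $|J^+(x,m;y)|$ is constant on each interval $[p_l,p_{l+1})$ and the lemma's right-hand side stays bounded there. Thus the Rankin weight $y$ transcribed verbatim from Lemma~\ref{thm:J+_theta} cannot give the stated bound uniformly in $y>2$; either the weight must be chosen more carefully, or a different argument is needed, which is presumably what Balazard supplies. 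Citing \cite{Balazard89}, as the paper does, is the correct move.
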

\vspace{3mm}

\begin{lemma}\label{sizejsets}
Let $q$ denote the sequence of primes and let $m\geq 1$. There exists a constant $C_m>0$ such that, for all $x\geq 3$,
\begin{equation}
 |J(x,m)|\leq C_m \frac{x}{\log x}(\log \log x)^{m-1} \label{EQ2}
\end{equation}
\end{lemma}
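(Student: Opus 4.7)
The plan is to identify $|J(x,m)|$ with the counting function $\pi_m(x)$ of integers $n \le x$ having exactly $m$ prime factors counted with multiplicity. Via the correspondence $\mathcal J(m) \leftrightarrow \Lambda(m)$ from the preliminaries, each $\bj = (j_1,\ldots,j_m) \in J(x,m)$ corresponds uniquely to the integer $n = p_{j_1}\cdots p_{j_m} \le x$ written with its prime factors in nondecreasing order; hence $|J(x,m)|=\pi_m(x)$. The estimate sought is exactly the upper-bound half of Landau's classical asymptotic $\pi_m(x) \sim \tfrac{x (\log\log x)^{m-1}}{(m-1)!\,\log x}$.

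I would proceed by induction on $m$. The base case $m=1$ is Chebyshev's estimate $\pi(x) = O(x/\log x)$. For the inductive step, split according to the first (smallest) index of $\bj$: if $\bj = (j_1,\ldots,j_m) \in J(x,m)$, then $p_{j_1}^m \le p_{j_1}\cdots p_{j_m} \le x$, so $p_{j_1} \le x^{1/m}$, and the remaining tuple $(j_2,\ldots,j_m)$ lies in $J(x/p_{j_1},\, m-1)$. This yields the key reduction
\[
|J(x,m)| \le \sum_{p \le x^{1/m}} \bigl|J(x/p,\, m-1)\bigr|,
\]
where the sum runs over primes.

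Next I would feed the induction hypothesis $|J(y,m-1)|\le C_{m-1}\, y\,(\log\log y)^{m-2}/\log y$ into each summand. For $p \le x^{1/m}$ one has $\log(x/p) \ge \tfrac{m-1}{m}\log x$ and $\log\log(x/p)\le\log\log x$, and substituting these uniform bounds leaves
\[
|J(x,m)| \le C_{m-1}\cdot\tfrac{m}{m-1}\cdot\tfrac{x\,(\log\log x)^{m-2}}{\log x} \sum_{p \le x^{1/m}} \tfrac{1}{p}.
\]
Mertens' theorem $\sum_{p\le y} 1/p = \log\log y + O(1)$ then bounds the last sum by $\log\log x + O(1)$, which delivers the desired estimate with a new constant $C_m$. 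There is no serious obstacle here: Chebyshev and Mertens are both elementary and the induction is routine bookkeeping; the only point worth verifying is that the case $m=2$ matches $(\log\log x)^{m-2}=1$, which it does.
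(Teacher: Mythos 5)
Your proof is correct. The paper itself does not prove this lemma; it simply attributes the estimate to Landau and points to Hardy and Wright for a proof, so any actual argument you write is ``different'' from the paper in the trivial sense that the paper gives none. What you supply is the standard, self-contained derivation of the upper-bound half of Landau's theorem, and it matches the classical argument in spirit: identify $|J(x,m)|$ with $\pi_m(x)$, the number of integers $n\le x$ having exactly $m$ prime factors counted with multiplicity, then induct on $m$ by stripping off the smallest prime factor $p_{j_1}$ (which satisfies $p_{j_1}^m\le p_\bj\le x$, hence $p_{j_1}\le x^{1/m}$), using Chebyshev's bound at the base and Mertens' estimate $\sum_{p\le y}1/p=\log\log y+O(1)$ at the inductive step. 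The recursion $|J(x,m)|\le\sum_{p\le x^{1/m}}|J(x/p,m-1)|$ is a genuine injection since $j_1$ can be recovered from $p_{j_1}$, and the two uniform bounds $\log(x/p)\ge\tfrac{m-1}{m}\log x$ and $\log\log(x/p)\le\log\log x$ for $p\le x^{1/m}$ are both valid. The items you wave off as ``routine bookkeeping'' really are routine: one must have $x/p\ge 3$ to invoke the inductive hypothesis, which holds once $x\ge 3^{m/(m-1)}$ (for smaller $x$ both sides are bounded so a constant absorbs it), and the additive $O(1)$ from Mertens folds into $C_m$ because $\log\log x\ge\log\log 3>0$ on the stated range $x\ge 3$. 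The net effect of your proposal is to make explicit a fact the paper treats as a black box.
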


\subsection{Proofs} \label{proofproof}
The proof of Theorem \ref{unc-basis-pol} is now very short.

\begin{proof}[Proof of Theorem~\ref{unc-basis-pol}.]
   The proof of the first statement is a direct consequence of Theorem \ref{THMMONOMIAL} for the index set $J=J(x,m)$, and of  the Lemmas \ref{lem:size_j+_reduced} and  \ref{sizejsets}. The second statement follows from  Lemma \ref{lem:the_trick_poly} combined with \eqref{BCQ}.
\end{proof}

\noindent
To present the Konyagin-Queff\'elec technique in general we need one more additional lemma.
\begin{lemma}
    \label{lem:cif}
    Let $m_1, m_2, l \in\N$ and  $P\in \mathcal{P}(^{m_1 + m_2} \ell_r)$ such that  $c_{\mathbf k} (P)\neq 0$ for only finitely many
    $\mathbf k \in \mathcal{J}(m_1+m_2)$. Then for every $\bi \in \mathcal J(m_1, l)$ the polynomial
       \begin{equation*}
        P_\bi = \sum_{\substack{\bj \in \mathcal J(m_2)\\j_1 > l}} c_{(\bi,\bj)}(P)\, z_{(\bi,\bj)} \in \mathcal{P}(^{m_2} \ell_r)
    \end{equation*}
    satisfies $$\norm[]{ P_\bi }_\infty \le \norm[]{ P }_\infty\,.$$
    \end{lemma}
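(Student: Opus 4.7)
The plan is to realize $P_\bi$ as a character average of rotations of $P$ that preserve $B_{\ell_r}$ setwise. For $\lambda = (\lambda_1,\dots,\lambda_l) \in \TT^l$, I would define the ``partial rotation'' $R_\lambda \colon \ell_r \to \ell_r$ by $R_\lambda(z)_k = \lambda_k z_k$ for $k \le l$ and $R_\lambda(z)_k = z_k$ for $k > l$. Since $|\lambda_k|=1$, the operator $R_\lambda$ is a linear isometry of $\ell_r$ and satisfies $R_\lambda(B_{\ell_r}) = B_{\ell_r}$, so
\[
\|P \circ R_\lambda\|_\infty = \|P\|_\infty \qquad \text{for every }\lambda \in \TT^l.
\]

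Next I would extract $P_\bi$ from $P$ via an integral over $\TT^l$. Every $\mathbf{k} \in \mathcal J(m_1+m_2)$ splits uniquely as $\mathbf{k} = (\mathbf{i}',\mathbf{j}')$, where $\mathbf{i}'$ collects the entries of $\mathbf{k}$ that are $\le l$ and $\mathbf{j}'$ collects those that are $> l$; consequently the monomial transforms as $z_{\mathbf{k}}(R_\lambda z) = \lambda_{\mathbf{i}'}\, z_{\mathbf{k}}(z)$. Letting $\alpha \in \N_0^l$ denote the multi-index associated to $\bi \in \mathcal J(m_1,l)$, so that $\lambda_\bi = \lambda^\alpha = \lambda_1^{\alpha_1}\cdots \lambda_l^{\alpha_l}$, the orthogonality of characters on $\TT^l$ gives
\[
\int_{\TT^l} \lambda_{\mathbf{i}'}\, \overline{\lambda^\alpha}\, dm(\lambda) = \begin{cases} 1 & \text{if } \mathbf{i}' = \bi, \\ 0 & \text{otherwise,} \end{cases}
\]
where $m$ is normalized Haar measure. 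Because $P$ has only finitely many nonzero monomial coefficients, the expansion of $P \circ R_\lambda$ can be integrated term by term against $\overline{\lambda^\alpha}$, and only the terms indexed by $\mathbf{k} = (\bi,\bj)$ with $\bj \in \mathcal J(m_2)$ and $j_1 > l$ survive. This yields the reproducing formula
\[
P_\bi(z) = \int_{\TT^l} (P \circ R_\lambda)(z)\, \overline{\lambda^\alpha}\, dm(\lambda).
\]

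The conclusion then follows at once from the triangle inequality and the rotation invariance of $B_{\ell_r}$: for every $z \in B_{\ell_r}$,
\[
|P_\bi(z)| \le \int_{\TT^l} \bigl|(P \circ R_\lambda)(z)\bigr|\, dm(\lambda) \le \|P\|_\infty.
\]
The step I expect to require the most care is the bookkeeping between the ordered tuple $\bi \in \mathcal J(m_1,l)$ and its associated multi-index $\alpha \in \N_0^l$, so that the character integral really isolates precisely the indices $\mathbf{k} = (\bi,\bj)$ with $j_1 > l$; this is the standard identification between $\mathcal J(m,n)$ and $\Lambda(m,n)$ recalled in Section~2, but is the one piece of the argument that must be written down carefully.
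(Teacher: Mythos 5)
Your proof is correct and is essentially the paper's argument: the paper's formula $P_\bi(u) = \int_{\mathbb{T}^l} P(\zeta_1 u_1, \ldots, \zeta_l u_l, u_{l+1}, \ldots)\, \bar{\zeta}_{i_1}\cdots\bar{\zeta}_{i_{m_1}}\, d\zeta$ is exactly your character average of $P\circ R_\zeta$ against $\overline{\zeta^\alpha}$, followed by the same triangle-inequality step.
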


\begin{proof}
        Given $u \in \ell_r$,  a straightforward calculation shows
    \begin{equation*}
        P_\bi(u)
        = \int_{\mathbb{T}^l}
            P\big( \zeta_1 u_1, \ldots, \zeta_{l}u_{l},u_{l+1}\ldots\big )
            \, \bar{\zeta}_{i_1} \dotsm  \bar{\zeta}_{i_l}
            \mathrm{d} (\zeta_1 , \ldots, \zeta_{l})\,,
    \end{equation*}
    which immediately implies the desired inequality.
\end{proof}

\begin{proof}[Proof of Theorem~\ref{unc-basis-q}.]

\bigskip\noindent
Recall the setting of our theorem. Let $x > e$ and $2 < y < x$, and choose $l \in \N$ such that $q_l \le y < q_{l+1}$.
Given $u \in B_{\ell_r}$, at first write $u = u^- + u^+$ where $u^-_k = 0$ for $k > l$ and $u^+_k = 0$ for $k \le l$. Any $\mathbf k\in J(x)$ may be written as $\mathbf k=(\bi,\bj)$ with   $\bi\in J^-(x;y)$ and  $\bj\in J^+(x,m;y)$. Moreover, $|u_\bi|=|u_{\bi}^-|$ and $|u_\bj|=|u_{\bj}^+|$. Hence,
\begin{align*}
    \sum_{q_{\mathbf k} \le x} \abs{c_{\mathbf k} u_{\mathbf k}}
    &
    = \sum_{\bi \in J^-(x;y)}
    \,\,
        \sum_{m\in \mathbb{N}_0}
         \,\,
        \sum_{\substack{\bj\in J^+(x,m;y) \\ q_{(\bi,\bj)} \le x}}
            \abs{c_{(\bi,\bj)} u_{(\bi,\bj)}}
            \\&
    = \sum_{\bi \in J^-(x;y)}
    \,\,
        \sum_{m\in \mathbb{N}_0}
        \abs{u^-_\bi}
          \,\,
        \sum_{\substack{\bj\in J^+(x,m;y) \\ q_{(\bi,\bj)} \le x}}    \abs{c_{(\bi,\bj)} u^+_\bj}\,.
\end{align*}
Using Theorem~\ref{THMMONOMIAL}, we can now estimate the latter sum for every $\bi \in J^-(x;y)$,
    \begin{align*}
        \abs{u^-_\bi}
          \sum_{\substack{\bj\in J^+(x,m;y) \\ q_{(\bi,\bj)} \le x}}
            \abs{c_{(\bi,\bj)} u^+_\bj}
        &\le \abs{u^-_\bi}
             C^m
            \abs{J^+(x,m;y)^\ast}^\sigma
            \sup_{\substack{\norm{\zeta}_{r} \le \norm{u^+}_r \\  \forall k \le l: \zeta_k = 0 }}
            \abs[\Big]{
                \sum_{\substack{\bj\in \mathcal J(m) \\ j_1 > l}}
                c_{(\bi,\bj)} \zeta_\bj
            } \\
        &\le C^m \abs{J^+(x,m;y)^\ast}^\sigma
            \sup_{\substack{\norm{\zeta}_{r} \le \norm{u^+}_r \\  \forall k \le l: \zeta_k = 0 }}
            \abs[\Big]{
                \sum_{\substack{\bj\in \mathcal J(m) \\ j_1 > l}}
                c_{(\bi,\bj)} u^-_\bi \zeta_\bj
            } \\
        &\le C^m \abs{J^+(x,m;y)^\ast}^\sigma
            \norm[\Big]{
                \sum_{\substack{\bj\in \mathcal J(m) \\ j_1 > l}}
                c_{(\bi,\bj)} z_{(\bi,\bj)}
            }_\infty,
    \end{align*}
    where the last inequality is a consequence of $( u^- + \zeta )_{(\bi,\bj)} = u^-_\bi \zeta_\bj$ and
    $$\norm{ u^- + \zeta }_r^r = \norm{u^-}^r_r + \norm{\zeta}_r^r \le \norm{ u^- }^r_r + \norm{u^+}^r_r \le 1\,.$$
    Choose for each $\bi \in J^-(x,y)$ some $m_\bi \in \mathbb{N}$ such that $\bi \in \mathcal{J}(m_\bi)$.
    By Lemma~\ref{lem:cif} we then  obtain
    \begin{align*}
        \sum_{q_{\mathbf k} \le x} \abs{c_{\mathbf k} u_{\mathbf k}}
        &\le \sum_{\bi \in J^-(x;y)} \quad \sum_m C^m \abs{J^+(x,m;y)^\ast}^\sigma
        \quad
            \norm[\big]{
                \;\;
                                \smashoperator{ \sum_{ \mathbf k \in \mathcal J(m+m_{\bi}) } }
                \;\;
            c_{\mathbf k} z_{\mathbf k}}_\infty\,.
            \end{align*}
           Moreover, if we decompose $f$ into its sum of homogeneous Taylor polynomials, then we deduce  by Cauchy estimates that
                      \begin{align*}
              \sum_{q_{\mathbf k} \le x} \abs{c_{\mathbf k} u_{\mathbf k}}
        &\le \Big(
            \abs{J^-(x;y)} \sum_m C^m \abs{J^+(x,m;y)^\ast}^\sigma
            \Big)
            \norm{ f }_\infty.
    \end{align*}
    Now $J^+(x,m;y)^\ast \subset J^+(x^\frac{m-1}{m}, m-1)$ and $J^+(x,m;y) = \emptyset$ for $m > \tfrac{ \log x }{ \log q_1 }$ by Lemma \ref{lem:size_j+_reduced} and Lemma~\ref{lem:size_j-}.
    Hence
    \begin{equation*}
        \abs{J^-(x;y)} \cdot \sum_m C^m \abs{J^+(x,m;y)^\ast}^\sigma \\
        \le \Big(1 + \frac{ \log x }{ \log q_1 } \Big)^{l+1}
            \sup_m C^m \abs{J^+(x^\frac{m-1}{m},m-1)}^\sigma.
    \end{equation*}
    Up to this point, our arguments are independent of the  specific choice of $q$. We threat both cases at once. In the case of $q$ denoting the sequence of primes, set $\theta = 1$.
    By Lemma~\ref{thm:J+_theta} and Lemma~\ref{thm:J+_primes}, respectively
    \begin{align*}
        \MoveEqLeft[3] \Big( 1 +\frac{ \log x }{ \log q_1 } \Big)^{l+1}
            \cdot
            \sup_m C^m \abs{J^+(x^\frac{m-1}{m},m-1)}^\sigma \\
        &\le \Big( 1 +\frac{ \log x }{ \log q_1 } \Big)^{l+1}
            \cdot
            \sup_m \bigg( C^m x^{\frac{m-1}{m}} y^{-m+1}
                \exp \Big( y \cdot \big( g_\theta(x) + c \big) \Big) \bigg)^\sigma. \\
    \intertext{Choosing $y = \tfrac{(\log x)^{\theta - \frac{1}{2}}}{\log \log x}$, this is}
        &= x^\sigma \exp\Big( o(1) \sqrt{\log x \log \log x} \Big)
            \cdot \sup_m \big( \overbrace{ C^m x^{-\frac{1}{m}} y^{-m} }^{=:\; \exp h_{x,y}(m)} \big)^\sigma.
    \end{align*}
    Note that $l = O(1) \tfrac{y}{(\log y)^\theta} = o(1) \tfrac{\sqrt{\log x}}{\log \log x}$\,; \,indeed,
     by the definition of $l$
\begin{equation*}
            l
            \, \big( \log \big( l + 2 \big) \big)^\theta
        \le y
        <   \big( l + 1 \big)
            \, \big( \log \big( l + 3 \big) \big)^\theta
        \le \big( l + 2 \big)^2
        \, ,
    \end{equation*}
   hence
    \begin{equation*}
        \frac{ y }{ ( \log y )^\theta }
        \ge \frac
        {
            l
            \, \big( \log \big( l + 2 \big) \big)^\theta
        }
        { \big( \log \big( ( l + 2 )^2 \big) \big)^\theta }
        =   2^{-\theta} \, l
        \, .
    \end{equation*}
    Differentiating $$h_{x,y}(m) = m\log C - \tfrac{1}{m}\log x - m \log y\,,$$ we see that it attains its maximum at
    \begin{equation*}
        M = \sqrt{\frac{\log x}{\log y - C}}
          \ge \sqrt{\frac{\log x}{\log y}},
    \end{equation*}
    and therefore
    \begin{align*}
        h_{x,y}(m)
        &\le h_{x,y}(M) \\
        &= \underbrace{\log(C) \sqrt{\frac{\log x}{\log y - C}} }_{
            \mathclap{=\; o(1)\sqrt{\log x \log \log x}}
          }
          - 2 \sqrt{\log x \log y} \\
        &= \big( -2 \sqrt{\theta - \tfrac{1}{2}} + o(1) \big) \sqrt{ \log x \log \log x},
    \end{align*}
    which proves the theorem.
\end{proof}

\section{Monomial convergence}
In this section we apply the  new estimates on the unconditional basis constant of polynomials on $\ell_r$
from the preceding two sections, to the  analysis of  sets $\mon \mathcal{P}(^m\ell_r)$
and $\mon H_\infty(B_{\ell_r})$ of monomial convergence of $m$-homogeneous polynomials on $\ell_r$ and bounded holomorphic functions on $B_{\ell_r}$.

\subsection{Polynomials } \label{polynomials}

The next statement gives the state of art for homogeneous polynomials.
\begin{theorem}Let $1 \leq r \leq \infty$ and $m\geq 2$.
	\label{zero}
	\hfill
	\begin{enumerate}
		\item[(1)]
			If $r=\infty$, then
			$\mon \mathcal{P}(^m \ell_\infty) = \ell_{\frac{2m}{m-1}, \infty}$.
		\item[(2)]
			If $r=1$, then
			$\mon \mathcal{P}(^m \ell_1) = \ell_{1}$.
		\item[(3)]
			If $\,2 \le r  <\infty$, then
			$\ell_{\frac{2m}{m-1}, \infty} \cdot \ell_r \subset \mon \mathcal{P}(^m \ell_r) \subset \ell_{\big(\frac{m-1}{2m} + \frac{1}{r}\big)^{-1}, \infty}$.
		\item[(4)]
			If $1 < r < 2$, then for any $\veps>0$,
			$\ell_{(mr')'- \varepsilon} \subset \mon \mathcal{P}(^m \ell_r) \subset \ell_{(mr')', \infty}$.
	\end{enumerate}
\end{theorem}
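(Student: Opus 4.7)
The four parts split naturally by the value of $r$. Parts (1) and (2) are essentially known: (1) is the main result of \cite{BaDeFrMaSe14} (combining the hypercontractive Bohnenblust-Hille inequality with a Lorentz-H\"older argument), and (2) follows directly from Lemma~\ref{coef}. Indeed, for $u \in \ell_1$ and $P \in \mathcal{P}(^m \ell_1)$, Lemma~\ref{coef} with $r=1$ gives $|c_\alpha(P)| \le e^m (m!/\alpha!) \|P\|_\infty$, so the multinomial theorem yields
\begin{equation*}
\sum_\alpha |c_\alpha(P)\, u^\alpha| \le e^m \|P\|_\infty \sum_\alpha \frac{m!}{\alpha!} |u|^\alpha = e^m \|P\|_\infty \|u\|_1^m < \infty;
\end{equation*}
the reverse inclusion is immediate. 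Parts (3) and (4) are the new contributions and rest on Theorem~\ref{THMMONOMIAL}.

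For the lower inclusion in (3), the plan is to reduce to part (1) by a multiplier trick. Given $u = v \cdot w$ (pointwise product) with $v \in \ell_{2m/(m-1),\infty}$ and $w \in \ell_r$, and $P \in \mathcal{P}(^m \ell_r)$, set
\begin{equation*}
P_w(z) := P(w \cdot z) = \sum_{\bj} c_\bj(P)\, w_\bj\, z_\bj.
\end{equation*}
Since $\|w \cdot z\|_r \le \|w\|_r \|z\|_\infty$, the polynomial $P_w$ belongs to $\mathcal{P}(^m \ell_\infty)$ with $\|P_w\|_\infty \le \|w\|_r^m \|P\|_{\mathcal{P}(^m \ell_r)}$. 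Applying part (1) to $P_w$ and $v$ yields $\sum_\bj |c_\bj(P)\, u_\bj| = \sum_\bj |c_\bj(P_w)\, v_\bj| < \infty$.

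For the lower inclusion in (4), the multiplier trick is insufficient (one checks that $\ell_{2m/(m-1),\infty} \cdot \ell_r \subsetneq \ell_{(mr')' - \varepsilon}$ when $r < 2$), and we resort to a dyadic block decomposition applying Theorem~\ref{THMMONOMIAL} together with the mixed-norm Cauchy estimate of Lemma~\ref{LEMPOLY}. Given $u \in \ell_q$ with $q < (mr')'$, pass to the decreasing rearrangement and split into dyadic blocks $B_k = \{j : 2^k \le j < 2^{k+1}\}$. For each signature $\vec k = (k_1 \le \cdots \le k_m)$ apply a block-restricted form of Theorem~\ref{THMMONOMIAL} to the index set $J(\vec k) = B_{k_1} \times \cdots \times B_{k_m}$, after using a Lemma~\ref{lem:cif}-style integration against characters to bound the corresponding block-restricted polynomial by $\|P\|_\infty$. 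The exponents from $|J^*(\vec k)|^{1-1/r}$ combined with the block-wise $\ell_q$-mass of $u$ should be geometrically summable in each $k_i$, producing the arithmetic identity $(m-1)(1-1/r)/m + 1/r = 1/(mr')'$ as the exact threshold.

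For the upper inclusions in (3) and (4) we apply the Kahane-Salem-Zygmund inequality to produce random-sign $m$-homogeneous polynomials $P_n \in \mathcal P(^m \ell_r^n)$ with unit-modulus coefficients and sharply controlled supremum norm, then evaluate at sequences $u_k = k^{-1/p}$ to force $u \notin \mon \mathcal P(^m \ell_r)$ as soon as $p$ exceeds the critical exponent. The main technical obstacle we anticipate is tuning the block decomposition in (4) so that (i) the H\"older bookkeeping of mixed $\ell_r$-norms across blocks does not waste the $(mr')'$ threshold, (ii) the block-restricted polynomial norms are correctly bounded (possibly via a more refined averaging than Lemma~\ref{lem:cif}), and (iii) the symmetric combinatorics of permutations of $\vec k$ are absorbed into the constants. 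Making the exponent $(mr')'$ emerge exactly, rather than some strictly smaller value, is the heart of the argument.
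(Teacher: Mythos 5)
Parts (1), (2), and the lower inclusion in (3) are fine and follow essentially the route of the paper: (1) and (2) are cited results, and your $P_w(z)=P(wz)$ multiplier trick is exactly the content of Lemma~\ref{LEMESTIMGENERALE} applied with part~(1). The upper inclusions in (3) and (4) are simply imported from \cite{DeMaPr09} in the paper, so reconstructing them via Kahane--Salem--Zygmund is possible but beside the point. The genuine new content is the lower inclusion in~(4), and here your plan both diverges from the paper and has a real gap. The paper does not use a dyadic block decomposition of the index space. It uses the prime-based Konyagin--Queff\'elec slicing of Section~\ref{KQ-method}: Theorem~\ref{THMMONOMIAL} is applied to $J(x,m)=\{\bj : p_\bj\le x\}$ together with the number-theoretic cardinality bound $|J(x,m)|\lesssim\frac{x}{\log x}(\log\log x)^{m-1}$ of Lemma~\ref{sizejsets}; the resulting logarithmic gain in Theorem~\ref{unc-basis-pol}(1) is what makes the dyadic-in-$N$ sum in Theorem~\ref{LEMPRESQUEPOLY} converge, and inserting $p_n\asymp n\log n$ then yields $\ell_{(mr')'-\varepsilon}\subset\mon\mathcal P(^m\ell_r)$.

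Your dyadic-block alternative is not wrong in spirit --- the diagonal signature $k_1=\cdots=k_m$ does reproduce the threshold $q<(mr')'$ --- but the off-diagonal signatures, which you flag as an anticipated ``technical obstacle'', are precisely where the argument as written fails. If one applies Theorem~\ref{THMMONOMIAL} \emph{as stated} to $J(\vec k)$ with $u$ restricted to $\bigcup_i B_{k_i}$, the factor $\|u\|_r^m$ is of order $2^{-m k_1(1/q-1/r)}$ (governed by the smallest block alone), while $|J^*(\vec k)|^{1-1/r}\sim 2^{(1-1/r)(k_1+\cdots+k_{m-1})}$ grows without bound in $k_2,\dotsc,k_{m-1}$; already for $m=2$ the bound is independent of $k_2$ and the sum over $k_2\ge k_1$ diverges. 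What is actually needed is the finer, block-multiplicative estimate
\begin{equation*}
\sum_{\bj\in J(\vec k)}\bigl|c_\bj(P)\bigr|\,|u_\bj|
\;\lesssim\;
C^m\,\bigl|J^*(\vec k)\bigr|^{1-\frac1r}\,\prod_{i=1}^m\bigl\|u|_{B_{k_i}}\bigr\|_r\,\|P\|_\infty,
\end{equation*}
which one can obtain by retracing the proof of Theorem~\ref{THMMONOMIAL} for $r\le 2$ (replacing the two global $\|u\|_r$-factors in the H\"older steps by the relevant block norms), but which is \emph{not} a consequence of the theorem's statement. You would have to prove that refinement explicitly for the plan to go through. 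Note also that the Lemma~\ref{lem:cif}-style integration you invoke is unnecessary here: Theorem~\ref{THMMONOMIAL} already applies to an arbitrary $J\subset\mathcal J(m,n)$ and to the original polynomial $P$; that integration device is only needed in the nonhomogeneous Konyagin--Queff\'elec argument of Theorem~\ref{unc-basis-q}.
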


\noindent
Several cases of this theorem are already known: the first one can be found in \cite{BaDeFrMaSe14} and the second one in \cite{DeMaPr09}. The upper estimate in the third and the fourth case can also be found in \cite{DeMaPr09}.
 The proof of the lower estimate in the third case follows from a general technique inspired by Lemma \ref{lem:the_trick_poly}. We need to introduce another  notation. For $X$ a Banach sequence space,  $R$ a Reinhard domain in $X$ and $\mathcal F(R)$ a set of holomorphic functions on $R$, we set
 $$[\mathcal F(R)]_\infty=\big\{f_w:u\in B_{\ell_\infty}\mapsto f(uw);\ w\in R,\ f\in\mathcal F(R)\big\}.$$
 $[\mathcal F(R)]_\infty$ is a set of holomorphic functions on $B_{\ell_\infty}$, and the following general result holds true.

\begin{lemma}\label{LEMESTIMGENERALE}
$R\cdot \mon[\mathcal F(R)]_\infty\subset\mon\mathcal F(R)$.
\end{lemma}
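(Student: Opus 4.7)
The plan is to reduce the inclusion to a direct identification of the monomial coefficients of $f_w$ with those of $f$, which makes the whole statement essentially a substitution.

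First I would verify that $f_w$ is genuinely a holomorphic function on $B_{\ell_\infty}$ for every $w\in R$ and every $f\in\mathcal F(R)$. Given $u\in B_{\ell_\infty}$, the pointwise product $uw$ satisfies $|uw|\le |w|$, so the Reinhardt property of $R$ yields $uw\in R$; hence $f_w=f(\cdot\,w)$ is well defined and holomorphic on $B_{\ell_\infty}$. Restricting to the finite-dimensional section $B_{\ell_\infty}\cap\mathbb C^n$ and substituting into the power series representation of $f$ on $R_n$ gives
\begin{equation*}
    f_w(u) \;=\; \sum_{\alpha\in\mathbb N_0^n} c_\alpha(f)\,(uw)^\alpha \;=\; \sum_{\alpha\in\mathbb N_0^n}\bigl(c_\alpha(f)\,w^\alpha\bigr)\,u^\alpha,
\end{equation*}
so by uniqueness of the monomial coefficients across all $n$,
\begin{equation*}
    c_\alpha(f_w) \;=\; c_\alpha(f)\,w^\alpha \qquad\text{for every } \alpha\in\mathbb N_0^{(\mathbb N)}.
\end{equation*}

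Next, I would pick $z\in R$ and $u\in\mon[\mathcal F(R)]_\infty$ (so in particular $u\in B_{\ell_\infty}$) and check that $zu$ lies in $R$ by applying the Reinhardt property to $|zu|\le|z|$. For every $f\in\mathcal F(R)$, the associated $f_z$ belongs to $[\mathcal F(R)]_\infty$, and the coefficient formula above yields
\begin{equation*}
    \sum_{\alpha} \bigl|c_\alpha(f)\,(zu)^\alpha\bigr|
    \;=\; \sum_{\alpha} \bigl|c_\alpha(f)\,z^\alpha\bigr|\,|u^\alpha|
    \;=\; \sum_{\alpha} \bigl|c_\alpha(f_z)\,u^\alpha\bigr| \;<\; \infty,
\end{equation*}
which is exactly the condition $zu\in\mon\mathcal F(R)$.

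The only slightly delicate point is the coefficient identification; everything else is a routine application of the Reinhardt property to $uw$ and to $zu$. Even that point is not a real obstacle, because after truncation to finitely many variables one is simply manipulating convergent power series on a polydisc, where Cauchy's uniqueness theorem for Taylor coefficients applies directly.
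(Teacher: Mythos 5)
Your proof is correct and follows essentially the same route as the paper's: identify $c_\alpha(f_w)=w^\alpha c_\alpha(f)$ and conclude by substitution. You simply spell out the preliminary checks (that $f_w$ is holomorphic on $B_{\ell_\infty}$ and that $zu\in R$ via the Reinhardt property) which the paper leaves implicit.
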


\begin{proof}
Let $w\in R$ and $u \in \mon [\mathcal F(R) ]_\infty$. For any $f\in\mathcal F(R)$ then $c_\alpha(f_w)=w^\alpha c_\alpha(f)$ and therefore
$$ \sum_{\alpha}|c_\alpha(f)||wu|^\alpha= \sum_\alpha |c_\alpha(f_w)||u|^\alpha<+\infty
.
$$
which yields the claim.
\end{proof}

\noindent
It is now easy to deduce the lower estimate in the third case, knowing the result of part (1). Indeed, $[\mathcal P(^m X)]_\infty$ is contained in the set of bounded $m$-homo\-geneous polynomials on $B_{\ell_\infty}$, thus in $\mathcal P(^m\ell_\infty)$ by the natural extension of a bounded polynomial from $B_{\ell_\infty}$ to $\ell_\infty$.

\medskip

The lower inclusion in (4) is a partial solution of a conjecture made in \cite{DeMaPr09} (see the remarks after Example 4.6 in \cite{DeMaPr09}). Its proof  seems less simple, and requires some preparation. Note that for $r \ge 2$ we
have that \begin{equation*}
		\frac{1}{
			p^{\frac{m-1}{2m}}
		} \cdot \ell_r
		\subset \mon \mathcal{P}(^m \ell_r)
	\end{equation*}
 which is an immediate consequence of Theorem \ref{zero}, (3). For  $1 <r< 2$ we can prove this  up to an $\varepsilon$:

\begin{theorem}\label{LEMPRESQUEPOLY}
 For $1 <r< 2$ and  $m\geq 1$ put $\sigma_m=\frac{m-1}{m}\left(1-\frac 1r\right)$. Then for every $\veps>\frac 1r$
$$\frac{1}{p^{\sigma_m}\big(\log(p)\big)^{\veps}}\cdot \ell_r\subset \mon\mathcal P(^m\ell_r).$$
In particular, for all $\varepsilon>0$,
	\begin{equation*}
		\frac{1}{
			p^{\sigma_m + \varepsilon }
		} \cdot \ell_r
		\subset \mon \mathcal{P}(^m \ell_r).
	\end{equation*}
\end{theorem}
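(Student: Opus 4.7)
The plan is to combine Theorem~\ref{unc-basis-pol}(1) with a dyadic decomposition in $p_\bj$, after absorbing the product of logarithms $\prod_i(\log p_{j_i})^\varepsilon$ into a single factor $(\log p_\bj)^\varepsilon$.

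Fix $w \in \ell_r$ and $\varepsilon > 1/r$, and set $u_k := w_k/\bigl(p_k^{\sigma_m}(\log p_k)^\varepsilon\bigr)$; I must verify that $\sum_{\bj \in \mathcal{J}(m)}|c_\bj(P)||u_\bj| < \infty$ for every $P \in \mathcal{P}(^m\ell_r)$. First I would replace
$|u_\bj| = |w_\bj|/\bigl(p_\bj^{\sigma_m}\prod_i(\log p_{j_i})^\varepsilon\bigr)$
by a cleaner quantity: the identity $\log p_\bj = \sum_{i=1}^m \log p_{j_i}$ forces $\log p_{j_m} \geq (\log p_\bj)/m$, and combined with $\log p_{j_i} \geq \log 2$ for the remaining indices this yields a constant $c_{m,\varepsilon}>0$ with
\[
\prod_{i=1}^m (\log p_{j_i})^\varepsilon \;\geq\; c_{m,\varepsilon}\,(\log p_\bj)^\varepsilon,
\]
so that $|u_\bj| \leq c_{m,\varepsilon}^{-1}\,|w_\bj|/\bigl(p_\bj^{\sigma_m}(\log p_\bj)^\varepsilon\bigr)$.

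Next I would split $\mathcal{J}(m)$ into the dyadic blocks $A_k := \{\bj : 2^{k-1} < p_\bj \leq 2^k\}$, on which the weight $p_\bj^{\sigma_m}(\log p_\bj)^\varepsilon$ is comparable to $2^{k\sigma_m}k^\varepsilon$. Bounding $\sum_{\bj \in A_k}$ by $\sum_{\bj:p_\bj \leq 2^k}$ costs only a (geometric) constant for $m\geq 2$, where $\sigma_m>0$; the case $m=1$ is trivial since $\mon\mathcal{P}(^1\ell_r)=\ell_r$ and $u \in \ell_r$ is obvious. Applying Theorem~\ref{unc-basis-pol}(1) with $x=2^k$ (so $\log x \asymp k$ and $\log\log x \asymp \log k$), I get
\[
\sum_{\bj:p_\bj\leq 2^k}|c_\bj(P)||w_\bj| \;\lesssim\; \frac{2^{k\sigma_m}(\log k)^{(m-1)(1-1/r)}}{k^{1-1/r}}\,\|w\|_r^m\,\|P\|_\infty,
\]
and the $2^{k\sigma_m}$ factors cancel against the dyadic weight, leaving
\[
\sum_\bj|c_\bj(P)||u_\bj| \;\lesssim\; \|w\|_r^m\,\|P\|_\infty \sum_{k\geq 1} \frac{(\log k)^{(m-1)(1-1/r)}}{k^{\varepsilon+1-1/r}}.
\]
This last series converges \emph{precisely} when $\varepsilon+1-1/r > 1$, i.e.\ under the hypothesis $\varepsilon > 1/r$, which establishes the main inclusion.

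The ``in particular'' statement is then a cosmetic consequence: given $\varepsilon>0$ and $w \in \ell_r$, I would fix any $\varepsilon'>1/r$ and rewrite
\[
\frac{w_k}{p_k^{\sigma_m+\varepsilon}} = \frac{\tilde w_k}{p_k^{\sigma_m}(\log p_k)^{\varepsilon'}} \quad\text{with}\quad \tilde w_k := w_k\,\frac{(\log p_k)^{\varepsilon'}}{p_k^\varepsilon};
\]
the prefactor $(\log p_k)^{\varepsilon'}/p_k^\varepsilon$ is bounded, so $\tilde w \in \ell_r$ and the main inclusion applies. The only genuinely delicate point in the whole argument is the log reduction in Step~1, coupled with the observation that the denominator in Theorem~\ref{unc-basis-pol}(1) carries the single power $(\log x)^{1-1/r}$ (not a higher power of $\log x$); this is exactly what places the critical threshold at $\varepsilon = 1/r$, and the rest is bookkeeping.
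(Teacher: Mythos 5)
Your proof is correct and follows essentially the same route as the paper's: the log-absorption inequality $\prod_i \log p_{j_i} \gtrsim_m \log p_\bj$, the dyadic partition in $p_\bj$, and the application of Theorem~\ref{unc-basis-pol}(1) at each scale $x=2^k$, leading to the same threshold $\varepsilon > 1/r$. The remark about ``costing a geometric constant'' is a red herring (bounding $\sum_{A_k}$ by $\sum_{p_\bj \le 2^k}$ is a free monotonicity step since the $A_k$ partition the index set), and the separate $m=1$ case is unnecessary since the dyadic estimate already works there, but neither affects the correctness.
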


\begin{proof}

Let $P=\sum_{\bj\in\mathcal J(m)}c_\bj(P) z_\bj\in\mathcal P(^m\ell_r)$ and let $u\in\ell_r$. We intend to show that
\begin{equation*}
    S:=\sum_{\bj\in\mathcal J(m)} |c_{\bj}(P)| \frac{1}{(p_{j_1}\cdots p_{j_m})^{\sigma_m} \big( \log(p_{j_1})\cdots \log(p_{j_m})\big)^\veps}|u_\bj|
    \leq C \norm[]{ u }_r^m \norm[]{ P }_\infty
\end{equation*}
for some constant $C>0$.
 Let us observe that, for any $j_1,\dots,j_m\geq 1$,
\begin{equation}\label{EQ1}
 \log(p_{j_1})\cdots \log(p_{j_m})\geq \frac{(\log 2)^{m-1}}m\ \log(p_{j_1}\cdots p_{j_m}).
\end{equation}
 We order the sum over $\bj\in\mathcal J(m)$ with respect to the value of the product $p_{j_1}\cdots p_{j_m}$.
Precisely, using (\ref{EQ1}), we write
\begin{eqnarray*}
 S&\ll&\sum_{N=m}^{+\infty} \sum_{\substack{\bj\in\mathcal J(m)\\ 2^N\leq p_\bj<2^{N+1}}}\frac{1}{p_\bj^{\sigma_m} \log^\veps(p_\bj)}|c_\bj(P)| |u_\bj|
\ll\sum_{N=m}^{+\infty}\frac{1}{2^{N\sigma_m}N^{\veps}}\sum_{p_\bj\le 2^{N+1}} |c_\bj(P)| |u_\bj|.
\end{eqnarray*}
We apply Theorem \ref{unc-basis-pol} to find
$$S\ll\sum_{N=m}^{+\infty}\frac{1}{2^{N\sigma_m}N^{\veps}} \frac{2^{N\sigma_m} \log(N)^{(m-1)\left(1-\frac 1r\right)}}{N^{1-\frac 1r}}\|P\|_\infty\|u\|_r^m.$$
The series is convergent since $\veps>1/r$.
\end{proof}

\noindent
Finally, we are ready to provide  the
\begin{proof}[Proof of the lower inclusion of Theorem~\ref{zero}, (4)]
	Given  $u \in  \ell_{(mr')'- \varepsilon}$, we show  that the decreasing rearrangement $u^* \in  \mon \mathcal{P}(^m \ell_r)$.
	Then for some $\delta > 0$ we have
	\begin{equation*}
		u^*_n \ll  \frac{1}{n^{\frac{1}{(mr')'- \varepsilon}  } } =  \frac{1}{n^{\frac{1}{(mr')'} + \delta } }.
	\end{equation*}
	By the prime number theorem we know that
	$p_n\asymp n \log n$,
		hence
	\begin{equation*}
		\frac{1}{
			n^{ \frac{1}{(mr')'} + \delta }
		}
		=
		\frac{1}{
			p_n^{
				\frac{m-1}{m} \frac{1}{r'}
				+ \frac{\delta}{2}
			}
		}
		\frac{
			p_n^{
				\frac{m-1}{m} \frac{1}{r'}
				+ \frac{\delta}{2}
			}
		}{
			n^{ \frac{1}{(mr')'} + \delta }
		}
		\ll
		\frac{1}{
			p_n^{
				\frac{m-1}{m} \frac{1}{r'}
				+ \frac{\delta}{2}
			}
		}
		\frac{
			(n \log n)^{
				\frac{m-1}{m} \frac{1}{r'}
				+ \frac{\delta}{2}
			}
		}{
			n^{ \frac{1}{(mr')'} + \delta }
		}.
	\end{equation*}
	But obviously
	\begin{equation*}
		\frac{
			(n \log n)^{
				\frac{m-1}{m} \frac{1}{r'}
				+ \frac{\delta}{2}
			}
		}{
			n^{ \frac{1}{(mr')'} + \delta }
		}
		=
		\frac{
			(\log n)^{
				\frac{m-1}{m} \frac{1}{r'}
				+ \frac{\delta}{2}
			}
		}{
			n^{ \frac{1}{r} }
			n^{ \frac{\delta}{2} }
		}
		\in \ell_r,
	\end{equation*}
	hence by Theorem \ref{LEMPRESQUEPOLY}
	\begin{equation*}
		\frac{1}{
			n^{ \frac{1}{(mr')'} + \delta }
		} \in  \mon \mathcal{P}(^m \ell_r),
	\end{equation*}
	the conclusion.
\end{proof}

\begin{remark}
 A look at \cite{DeMaPr09} shows that in the case   $r>2$ the proof of the inclusion $\mon\mathcal P(^m\ell_r)\subset \ell_{\big(\frac{m-1}{2m} + \frac{1}{r}\big)^{-1}, \infty}$
keeps working if we replace $\ell_r$ by $\ell_{r,\infty}$. Indeed, it just uses that
$$\sup_{u\in \ell_r^n,\ \|u\|_r\leq 1}\sum_{k=1}^n |u_k|^2=n^{1-\frac 2r}$$
and this remains true, up to a constant factor, if we replace $B_{\ell_r^n}$ by $B_{\ell_{r,\infty}^n}$. If we combine this with Lemma \ref{LEMESTIMGENERALE},
then we find that, for $r>2$,
$$\mon \mathcal P(^m\ell_{r,\infty})=\ell_{\big(\frac{m-1}{2m} + \frac{1}{r}\big)^{-1}, \infty}.$$
\end{remark}

\subsection{Holomorphic functions} \label{holomorphic functions}
We now study $\mon H_\infty(B_{\ell_r})$ for $1\leq r\leq +\infty$. The extreme cases are already well-known: By a result of Lempert (see e.g. \cite{Le99} and \cite{DeMaPr09}) we have
\begin{equation} \label{Lempert}
\mon H_\infty(B_{\ell_1}) = B_{\ell_1}\,.
\end{equation}
Moreover by \cite{BaDeFrMaSe14} we know that
\begin{equation} \label{main}
B \subset \mon H_\infty(B_{\ell_\infty}) \subset \overline{B}
\end{equation}
where
\begin{eqnarray*}
B& =& \Bigg\{
			u \in B_{\ell_\infty}
		\,;\,
			\limsup_n \frac{1}{\log n} \sum_{k=1}^n \lvert u_k^\ast \rvert^2 < 1
		 \Bigg\}\,\\
\overline B &=&  \Bigg\{
			u \in B_{\ell_\infty}
		\,;\,
			\limsup_n \frac{1}{\log n} \sum_{k=1}^n \lvert u_k^\ast \rvert^2 \leq 1
		 \Bigg\}\,.
\end{eqnarray*}
For $1<r<\infty$, it was shown in \cite{DeMaPr09} that, setting $\tfrac{1}{s} = \tfrac{1}{2} + \tfrac{1}{\max\{r,2\}}$, for every $\varepsilon >0$
\begin{equation}
		\label{eq:mon_epsilon}
		B_{\ell_r} \cap \ell_s
		\subset \mon H_\infty(B_{\ell_r})
		\subset B_{\ell_r} \cap \ell_{s+\epsilon}\,.
	\end{equation}
In the following we improve the previous inclusion, and show in particular that here $\varepsilon = 0$ is not possible.
More precisely, we give necessary and sufficient conditions on $(\alpha,\beta)\in [0, \infty[^2$ such that
$$\left(\frac{1}{n^{\alpha}\big(\log (n+2)\big)^\beta}\right)_n \in \mon H_\infty(B_{\ell_r})\,.$$
Note that by \eqref{main} for every $\beta >0$
\begin{equation} \label{one}
\left(\frac{1}{n^{\frac{1}{2}}\big(\log (n+2)\big)^\beta} \right)_n\in \mon H_\infty(B_{\ell_\infty})\,;
\end{equation}
we do not know whether here $\beta=0$ is possible. Moreover,  by \eqref{Lempert}
\begin{equation} \left(\label{two}\frac{1}{n\big(\log (n+2)\big)^\beta}\right)_n \in \mon H_\infty(B_{\ell_1})
\end{equation}
if and only if  $\beta >1$. The following result collects our knowledge in the remaining cases:

\begin{theorem}
	\label{thm:psigma}
	For $1\le  r \le \infty$ put $\sigma=1-\frac{1}{\min(r,2)}$. Then
    \begin{enumerate}
        \item[(1a)]
            For any $\theta > \frac{1}{2}$ and $1\le  r \le 2$
            \begin{equation*}
                \left(\frac{1}{ n^{\sigma} \cdot \big( \log (n+2) \big)^{\theta\sigma}}\right)_n
                \cdot B_{\ell_r}
                \subset \mon H_\infty( B_{\ell_r} ).
            \end{equation*}

            In particular, $\left(\tfrac{1}{n^{\frac 1r+\sigma}(\log (n+2))^{\beta}}\right)_n \in \mon H_\infty(B_{\ell_r})$  whenever  $\beta>\frac{1}{2r}+\frac{1}{2} $.

        \item[(1b)]
            For any $\theta > 0$ and $2\le r \le \infty$
            \begin{equation*}
               \left(\frac{1}{n^\sigma \cdot \big( \log (n+2) \big)^\theta }\right)_n
                \cdot B_{\ell_r}
                \subset \mon H_\infty( B_{\ell_r} ).
            \end{equation*}

            In particular,  $\left(\tfrac{1}{n^{\frac 1r+\sigma}(\log (n+2))^{\beta}}\right)_n \in \mon H_\infty(B_{\ell_r})$ whenever $\beta>\frac 1r$.

        \item[(2)]
            Suppose that  $\displaystyle \left(\frac{1}{n^{\frac 1r+\sigma}(\log (n+2))^{\beta}}\right)_n\in \mon H_\infty(B_{\ell_r})$. Then $\beta\geq \frac{1}{r}$.
    \end{enumerate}
\end{theorem}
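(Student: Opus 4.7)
For part (1a), my plan is to apply Theorem~\ref{unc-basis-q}(2) with the auxiliary sequence $q_k = k(\log(k+2))^\theta$, chosen so that $q_n^\sigma = n^\sigma(\log(n+2))^{\theta\sigma}$. Writing $u_\bj = v_\bj/q_\bj^\sigma$ with $v \in B_{\ell_r}$, I decompose $\sum_\bj|c_\bj(f)u_\bj|$ dyadically along the values of $q_\bj$; on each block $\{2^N \le q_\bj < 2^{N+1}\}$, the factor $q_\bj^{-\sigma} \asymp 2^{-N\sigma}$ pulls out, and Theorem~\ref{unc-basis-q}(2) bounds the residual sum by $C\cdot 2^{N\sigma}\exp((-2\sigma\sqrt{\theta-1/2}+o(1))\sqrt{N\log N})\|f\|_\infty$. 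The factors of $2^{N\sigma}$ cancel, leaving a series of the form $\sum_N \exp((-c+o(1))\sqrt{N\log N})$ with $c = 2\sigma\sqrt{\theta-1/2} > 0$; this converges precisely because $\theta > 1/2$ and $\sigma > 0$ (i.e.\ $r > 1$). The degenerate case $r = 1$ ($\sigma = 0$) is immediate from $\mon H_\infty(B_{\ell_1}) = B_{\ell_1}$.

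For part (1b) with $2 \le r \le \infty$ and $\sigma = 1/2$, I plan to bypass the Konyagin--Queff\'elec machinery altogether and reduce directly to the $\ell_\infty$ case via Lemma~\ref{LEMESTIMGENERALE}. Setting $a_n = 1/(n^{1/2}(\log(n+2))^\theta)$, the key point is that $a$ belongs to the set $B$ from~\eqref{main}: since $a$ is already decreasing, $\frac{1}{\log N}\sum_{k=1}^N a_k^2 = \frac{1}{\log N}\sum_{k=1}^N 1/(k(\log(k+2))^{2\theta})$ tends to $0$ for every $\theta > 0$ (the inner sum is bounded, or grows like $\log\log N$, or like $(\log N)^{1-2\theta}$ with $2\theta < 1$, but in all cases strictly slower than $\log N$). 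Consequently $a \in \mon H_\infty(B_{\ell_\infty})$. Because $B_{\ell_r}$ is Reinhardt and $[H_\infty(B_{\ell_r})]_\infty \subset H_\infty(B_{\ell_\infty})$ (since $vz \in B_{\ell_r}$ whenever $v \in B_{\ell_r}$ and $z \in B_{\ell_\infty}$), Lemma~\ref{LEMESTIMGENERALE} then yields $B_{\ell_r} \cdot a \subset \mon H_\infty(B_{\ell_r})$, which is exactly the claim.

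Part (2) is the hard direction and requires producing a counterexample. For $r = \infty$ it is immediate from $u \in B_{\ell_\infty}$, and for $r = 1$ it follows from $\mon H_\infty(B_{\ell_1}) = B_{\ell_1}$, which forces $\sum_n 1/(n(\log(n+2))^\beta) < \infty$ and hence $\beta > 1 \ge 1/r$. For $r \in (1,\infty)$ my plan is, for each $\beta < 1/r$, to construct an explicit $f \in H_\infty(B_{\ell_r})$ with $\sum_\bj |c_\bj(f)u_\bj| = +\infty$, built as a weighted superposition $f = \sum_m P_m/w_m$ of $m$-homogeneous polynomials $P_m$ supported on the prime-indexed sets $J(x_m, m) = \{\bj \in \mathcal J(m): p_\bj \le x_m\}$, each $P_m$ chosen to (almost) saturate the Sidon-type lower bound in~\eqref{BCQ} (or its $\ell_r$-analogue implicit in Theorem~\ref{unc-basis-pol}). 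The weights $w_m$ must ensure $\sum \|P_m\|_\infty/w_m < \infty$, while the scales $x_m$ are tuned so that $\sum_{\bj \in J(x_m,m)}|c_\bj(P_m)u_\bj|/w_m$ accumulates to $+\infty$. The main obstacle will be the simultaneous calibration of $(x_m)_m$ and $(w_m)_m$ so that the norm contributions stay summable while the value contributions at $u$ diverge precisely on the threshold $\beta = 1/r$; this is a delicate balancing act against the prime number theorem and the size estimates collected in Section~\ref{sizesizesize}.
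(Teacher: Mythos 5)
Your treatment of parts (1a) and (1b) matches the paper's proof essentially verbatim: (1a) applies Theorem~\ref{unc-basis-q}(2) with $q_k = k(\log(k+2))^\theta$ and sums over exponential blocks in $q_\bj$ (the paper uses base $e$, you use base $2$, which is immaterial), and (1b) reduces to the $\ell_\infty$ case via membership in the set $B$ from~\eqref{main} and Lemma~\ref{LEMESTIMGENERALE}. Both are correct.

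Part (2) is where your plan diverges from the paper and where a genuine gap remains. The paper does \emph{not} attempt to build a single divergent $f$ by superposing homogeneous pieces $P_m/w_m$. Instead it proceeds in three steps that you are missing. First, a closed-graph argument: if $x \in \mon H_\infty(B_{\ell_r})$, then there is a uniform constant $\tilde C$ with $\sum_\bj |c_\bj(f) x_\bj| \le \tilde C\,\|f\|_\infty$ for \emph{all} $f$, so one only needs a sequence of polynomials that violates any such uniform bound, not a single divergent function -- this is a much lighter burden than the superposition you propose. Second, the paper exploits the specific Kahane--Salem--Zygmund-type construction from \cite[Cor.~3.2]{BAYMAXMOD}: random signs $\veps_\bj$ on all of $\mathcal J(m,n)$ (not the prime-indexed sets $J(x_m,m)$) with the extra weight $|\bj|$, so that the multinomial identity $\big(\sum_{k\le n}|x_k|\big)^m = \sum_{\bj\in\mathcal J(m,n)}|\bj|\,|x_\bj|$ turns the value side of the estimate into an exact power of the partial $\ell_1$-sum; you then take $m$th roots and optimize $m\approx\log n$. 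Your prime-indexed sets destroy this clean multinomial identity, and your appeal to \eqref{BCQ} or Theorem~\ref{unc-basis-pol} is misdirected -- those give \emph{upper} bounds on unconditionality constants, not the lower bounds on sup-norms of random polynomials you actually need. Third, the random polynomial bound used in the paper is only valid for $1<r\le 2$; for $r>2$ the paper reduces to $r=2$ by composing with a diagonal operator $D_\xi:\ell_r\to\ell_2$ with $\xi_k \asymp k^{-1/t}(\log(k+2))^{-1/t-\veps}$ and $1/t + 1/r = 1/2$, a step absent from your plan. As written, your part (2) is an outline of a harder route (the calibration you yourself flag as "delicate") rather than a proof, and it omits the closed-graph step, the multinomial trick, the correct reference for the random lower bound, and the $r\ge 2$ reduction.
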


\noindent
Note that we cannot replace $\log(n+2)$ by $\log(n+1)$ in the previous statement. Indeed, it can be easily seen by restricting the study to the one-dimensional case that $\mon H_\infty(B_{\ell_r}) \subset \mon H_\infty(B_{\ell_\infty})\subset B_{\ell_\infty}$.

\begin{proof}
We begin with part (1b): We have that $\theta > 0$ and $2\le r \le \infty$.
    Then an easy argument yields
    \begin{equation*}
       \left( \frac{1}{n^\frac{1}{2} \cdot \big( \log (n+2) \big)^\theta }\right)_n
        \in B\,.
    \end{equation*}
   By Lemma \ref{LEMESTIMGENERALE} and by observing that $[H_\infty(B_{\ell_r})]_\infty\subset H_\infty(B_{\ell_\infty})$, we get immediately
    \begin{equation*}
       \left( \frac{1}{n^\frac{1}{2} \cdot \big( \log (n+2) \big)^\theta }\right)_n
            \cdot B_{\ell_r}
            \subset
            \mon H_\infty( B_{\ell_\infty} )
            \subset \mon H_\infty( B_{\ell_r} ).
                \end{equation*}
Let us now prove part (1a):
Assume that $\theta > \frac{1}{2}$ and $1\le  r \le 2$.
We shall apply Theorem  \ref{unc-basis-q}, (2) with the sequence $q$ defined by $q_j=j\cdot (\log(j+2)\big)^\theta$.
    Then for  $f \in H_\infty(\ell_r)$ and $u \in B_{\ell_r}$ the conclusion follows from
    \begin{align*}
        \MoveEqLeft[3]
        \sum_{\bj} \frac{1}{q_\bj^\sigma} \abs{ c_\bj(f) u_\bj }
        \\
        &= \sum_{N = 0}^\infty \;
             \sum_{ \bj: e^N < q_\bj \le e^{N+1}}
                \frac{1}{q_\bj^\sigma} \abs{ c_\bj(f) u_\bj } \\
        &\le \sum_{N = 0}^\infty
             \frac{1}{e^{N\sigma}}\;
             \sum_{ \bj\in J(e^{N+1})}
                 \abs{ c_\bj(f) u_\bj } \\
        &\le \sum_{N = 0}^\infty
                \frac{1}{e^{(N-1)\sigma}}
                e^{N\sigma}
                \exp \Big(
                    \big( -2 \sigma \sqrt{\theta - \frac{1}{2}} + o(1) \big)
                    \sqrt{N \log N} \Big)
            \cdot
            \norm{f}_\infty
        < \infty.
    \end{align*}
    Finally, we check part (2): For $r=1$, \eqref{Lempert} already proves the claim. At first we will treat the case $1< r \le 2$ with a probabilistic argument. Afterwards we reduce the case $r \ge 2$ to the case $r=2$. Let $1 < r\le 2$. We shall apply Corollary~3.2 of \cite{BAYMAXMOD} (with $p = r$). Then there is an absolute constant $C \ge 1$ such that for any $m,n$ there are $(\veps_\bj)_\bj \in \T^{\jmn}$ for which
    \begin{equation}
        \label{eq:baymaxmod}
        \sup_{ u \in B_{\ell_r^n} }
            \abs[\Big]{
                \sum_{\bj}
                    \veps_{\bj}
                    \, \abs[]{ \bj }
                    \, u_\bj
            }
        \le C (n \log m)^\sigma m^{m \sigma}
        \, .
    \end{equation}
        Let now $x = \big( k^{-1}(\log (k+2))^{-\beta} \big)_k$ denote the sequence in question and assume $x \in \mon H_\infty(B_{\ell_r})$. Then, by a closed graph argument, there exists a constant $\tilde C \ge 1$, such that for every $f \in H_\infty(B_{\ell_r})$,
    \begin{equation}
        \label{eq:cg_constant}
        \sum_{\bj}
            \abs[]{ c_\bj(f) \, x^\alpha }
        \le \tilde C \, \norm[]{ f }
        \, .
    \end{equation}
    For any $n\in\N$ now,
    \begin{equation*}
        \Big( \sum_{k=1}^n  \abs{x_k} \Big)^m
        =
            \sum_{\bj \in \jmn}
            \abs[\big]{
                \veps_\bj
                \, \abs[]{ \bj }
                \, \abs[]{ x_\bj }
            }
        \le \tilde C
            \sup_{ u \in B_{\ell_r^n} }
            \abs[\Big]{
                \sum_{\bj}
                    \veps_{\bj}
                    \, \abs[]{ \bj }
                    \, u_\bj
            }
        \le \tilde C \, C
            \, (n \log m)^\sigma m^{m \sigma}
        \, .
    \end{equation*}
    by \eqref{eq:cg_constant} and \eqref{eq:baymaxmod}. Taking the $m$\textsuperscript{th} root, we obtain
    \begin{equation*}
        \sum_{k=1}^n \frac{1}{k ( \log (k+2) )^\beta}
        \le \big( \tilde C \, C \big)^\frac{1}{m}
            (n \log m)^\frac{\sigma}{m} m^{\sigma}
    \end{equation*}
    for every $n,m\in\N$. It now suffices to notice that with $m = \lfloor \log n \rfloor$ the right-hand side is asymptotically equivalent to $(\log n)^\sigma$ and the left-hand side to $(\log n)^{1-\beta}$  as $n \to \infty$. Hence $\beta > -\sigma +1 = \tfrac{1}{r}$.

    Now suppose $r\ge 2$ and set $\xi = \big( k^{-\frac{1}{t}} (\log(k+2))^{-\frac{1}{t} - \veps} \big)_k$ for $\tfrac{1}{t}+\tfrac{1}{r}=\tfrac{1}{2}$ and $\veps > 0$. Consider $f\in H_\infty(B_{\ell_2})$ and let us set $g = f \circ D_\xi$, where $D_\xi$ denotes the diagonal operator $\ell_r \to \ell_2$ induced by $\xi$, which is bounded by Hölder's inequality. Thus $g \in H_\infty(B_{\ell_r})$. We have
    \begin{equation*}
        \sum_\bj \abs[]{ c_\bj(f) }
        \frac{1}{j_1 (\log(j_1 + 2))^{\frac{1}{t} + \beta + \veps }}
        \dotsm
        \frac{1}{j_m (\log(j_m + 2))^{\frac{1}{t} + \beta + \veps }}
        = \sum_\bj
            \abs[\big]{ \big( c_\bj(f) \, \xi_\bj \big) \, x_\bj }
        < \infty
        \, ,
    \end{equation*}
    under the assumption that $x = \big( k^{-\frac{1}{r} - \frac{1}{2}} (\log(k+2))^{-\beta} \big)_k \in \mon H_\infty(B_{\ell_r})$ (note that $\tfrac{1}{t} + \tfrac{1}{r} + \tfrac{1}{2} = 1$).   Hence $\big( k (\log(k+2))^\frac{1}{t} + \beta + \veps \big)_k \in \mon H_\infty(B_{\ell_2})$ and by our result in the case $r=2$, $\tfrac{1}{t} + \beta + \veps \geq \tfrac{1}{2}$ for every $\veps > 0$.
\end{proof}

\noindent
We are now able to give an answer to our previously stated question: the inclusion \eqref{eq:mon_epsilon} holds \emph{not} true for $\varepsilon = 0$.

\begin{corollary} \label{CORHOL1}
    Let $1 < r < \infty$ and $\tfrac{1}{s} = \tfrac{1}{2} + \tfrac{1}{\max\{r,2\}}$. Then
    \begin{equation*}
        B_{\ell_r} \cap \ell_s  \subsetneq  \mon H_\infty(B_{\ell_r}).
    \end{equation*}
\end{corollary}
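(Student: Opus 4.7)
The forward inclusion $B_{\ell_r} \cap \ell_s \subset \mon H_\infty(B_{\ell_r})$ is already recorded in \eqref{eq:mon_epsilon}, so the task reduces to exhibiting one element of $\mon H_\infty(B_{\ell_r}) \setminus \ell_s$.

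The observation that drives the whole argument is that the critical exponent $\frac{1}{r}+\sigma$ appearing in Theorem~\ref{thm:psigma} coincides with $\frac{1}{s}$ in both regimes: for $r\ge 2$ it is $\frac{1}{r}+\frac{1}{2}=\frac{1}{s}$, and for $1<r\le 2$ it is $\frac{1}{r}+\frac{r-1}{r}=1=\frac{1}{s}$. The natural candidate is therefore
\[
	x_n \;=\; \frac{c_r}{n^{1/s}\,(\log(n+2))^{1/s}}\,,
\]
where $c_r>0$ is chosen small enough to ensure $\|x\|_r<1$. This is legitimate because $r/s>1$ in both ranges ($r/s=1+r/2\ge 2$ when $r\ge 2$, and $r/s=r>1$ when $1<r<2$), so the series $\sum n^{-r/s}(\log(n+2))^{-r/s}$ converges.

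The virtue of taking the logarithmic exponent equal to $1/s$ is that $\sum x_n^s$ is a constant multiple of the borderline series $\sum n^{-1}(\log(n+2))^{-1}$, which diverges; hence $x\notin \ell_s$, independently of $c_r$. It remains to verify $x\in \mon H_\infty(B_{\ell_r})$. For $r\ge 2$ I invoke Theorem~\ref{thm:psigma}(1b) with $\beta=1/s$: the hypothesis $\beta>1/r$ boils down to $\tfrac{1}{2}>0$. For $1<r<2$ I invoke Theorem~\ref{thm:psigma}(1a) with $\beta=1$: the hypothesis $\beta>\tfrac{1}{2r}+\tfrac{1}{2}$ becomes $1>\tfrac{r+1}{2r}$, i.e.\ $r>1$, which is precisely our standing assumption. (In both cases Theorem~\ref{thm:psigma} produces the desired membership up to multiplication by an element of $B_{\ell_r}$; the scalar $c_r$ absorbs this, and scaling by $|\lambda|\le 1$ preserves the monomial convergence property since it only shrinks each term $|c_\alpha(f) x^\alpha|$.)

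There is no real obstacle once Theorem~\ref{thm:psigma} is in hand; the corollary is essentially the arithmetic remark that the logarithmic exponent $\beta=1/s$ is exactly the threshold where one leaves $\ell_s$ while still lying inside the sufficient condition of Theorem~\ref{thm:psigma}. The use of $r>1$ (as opposed to $r\ge 1$) is genuinely needed in the $1<r\le 2$ case, since without it the two thresholds collide.
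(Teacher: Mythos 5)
Your proof is correct, and it takes a genuinely different (and arguably cleaner) route than the paper's. You construct an explicit witness $x_n = c_r\, n^{-1/s}(\log(n+2))^{-1/s}$ lying in $\mon H_\infty(B_{\ell_r}) \setminus \ell_s$, checking membership via the \emph{in particular} clauses of Theorem~\ref{thm:psigma} and noting that the logarithmic exponent $1/s$ is tuned so that $\sum x_n^s \asymp \sum (n\log(n+2))^{-1}$ diverges. The arithmetic is all correct: $1/r+\sigma=1/s$ in both regimes, $\beta=1/s$ exceeds the threshold $1/r$ (resp.\ $\tfrac{1}{2r}+\tfrac12$) precisely because $r>1$, $r/s>1$ guarantees $x\in\ell_r$, and the small scalar $c_r$ together with the observation that shrinking a point of $\mon$ keeps it in $\mon$ tidies up the normalization. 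The paper instead argues by contradiction: assuming equality, Theorem~\ref{thm:psigma} (the $\cdot B_{\ell_r}$ form with $q_k = k\log(k+2)$) forces the diagonal operator $D_{q^{-\sigma}}:\ell_r\to\ell_s$ to be well-defined, hence bounded by closed graph, hence $q^{-\sigma}\in\ell_t$ with $1/s=1/r+1/t$; but $\sigma t = 1$ in both regimes so $\sum q_k^{-\sigma t}=\sum(k\log(k+2))^{-1}=\infty$. Both arguments rest on the same Theorem~\ref{thm:psigma}; yours avoids the closed-graph/duality step by exhibiting the counterexample directly, whereas the paper's version makes the obstruction visible as a failure of diagonal-operator boundedness. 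Your approach has the small advantage of producing a concrete sequence and the same key computation ($\sum (k\log(k+2))^{-1}=\infty$) appears in both.
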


\begin{proof}
    Assume equality. Let $q = \big( k \log (k+2) \big)_k$. By Theorem~\ref{thm:psigma} this implies that the diagonal operator $\ell_r \to \ell_s$ induced by the sequence $q^{-\sigma}$, where $\sigma = 1 - \tfrac{1}{\min \{ r,2 \}}$, is well-defined and by a closed graph argument bounded. Hence
    \begin{equation*}
        \bigg(
            \sum_{k=1}^\infty
            \abs[\big]{ q_k^{-\sigma} }^t
        \bigg)^\frac{1}{t}
        =   \sup_{x \in B_{\ell_p}} \Big(
            \sum_{k=1}^\infty
                \abs[\big]{ x_k \, q_k^{-\sigma} }^s
            \Big)^\frac{1}{s}
        =   \norm[]{ D_{ q^{-\sigma} } : \ell_r \to \ell_s }
        <   \infty
        \, ,
    \end{equation*}
    where $\tfrac{1}{s} = \tfrac{1}{r} + \tfrac{1}{t}$. Therefore $q^{-\sigma} \in \ell_t$. But
    \begin{equation*}
        \sum_{k=1}^\infty q_k^{-\sigma t}
        =   \sum_{k=1}^\infty
            \frac{1}{k \log (k+2)}
        =   \infty
        \, ,
    \end{equation*}
    a contradiction.
\end{proof}

\noindent
Using the same technique as in the proof of (1a) in Theorem~\ref{thm:psigma}, we easily obtain the following analog of Theorem \ref{LEMPRESQUEPOLY}.

\begin{corollary}\label{CORHOL}
    Let $1< r < \infty$ and let $\sigma = 1 - \tfrac{1}{\min\{ r, 2 \}}$. Then
	\begin{equation} \label{thm:psigma-old}
		p^{-\sigma} \cdot B_{\ell_r}
				\subset \mon H_\infty(B_{\ell_r})\,,
	\end{equation}
    and here $\sigma$ is best possible.
\end{corollary}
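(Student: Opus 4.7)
The statement splits into the inclusion $p^{-\sigma} \cdot B_{\ell_r} \subset \mon H_\infty(B_{\ell_r})$ and the optimality of the exponent $\sigma$. For the inclusion I plan to mimic the proof of Theorem~\ref{thm:psigma}(1a) essentially verbatim, swapping the sequence $q$ for the primes $p$ and invoking part~(1) of Theorem~\ref{unc-basis-q} in place of part~(2). Given $u \in B_{\ell_r}$, set $v=p^{-\sigma} u$, which lies in $B_{\ell_r}$ since $p_k \ge 2$; for $f \in H_\infty(B_{\ell_r})$ I decompose dyadically according to the size of $p_\bj$,
\begin{equation*}
  \sum_\bj |c_\bj(f)|\, p_\bj^{-\sigma}\, |u_\bj|
  = \sum_{N\ge 0}\ \sum_{e^N < p_\bj \le e^{N+1}} |c_\bj(f)|\, p_\bj^{-\sigma}\, |u_\bj|
  \le \sum_{N\ge 0} e^{-N\sigma} \sum_{p_\bj \le e^{N+1}} |c_\bj(f) u_\bj|,
\end{equation*}
and apply Theorem~\ref{unc-basis-q}(1) with $x=e^{N+1}$ to each inner sum. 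The factor $e^{(N+1)\sigma}$ supplied by that theorem cancels the dyadic weight $e^{-N\sigma}$, leaving a series whose $N$th term is $e^\sigma \exp\bigl((-\sqrt 2\,\sigma+o(1))\sqrt{N\log N}\bigr)\|f\|_\infty$; this is summable because $\sigma>0$ for $r>1$.

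For sharpness I will show that any $\sigma'<\sigma$ is insufficient, by producing $u \in B_{\ell_r}$ with $p^{-\sigma'}u \notin \mon H_\infty(B_{\ell_r})$. The prime number theorem gives $p_k\asymp k\log k$, so choosing $u_k=k^{-1/r}(\log(k+2))^{-\gamma}$ (suitably normalized) with $\gamma>1/r$ places $u$ in $B_{\ell_r}$ and yields $(p^{-\sigma'} u)_k \asymp k^{-(1/r+\sigma')}(\log k)^{-(\sigma'+\gamma)}$, a sequence whose polynomial exponent $1/r+\sigma'$ lies strictly below the critical value $1/r+\sigma$ appearing in Theorem~\ref{thm:psigma}(2). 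In the range $1<r\le 2$ I reuse the Kahane--Salem--Zygmund plus closed graph argument from the proof of Theorem~\ref{thm:psigma}(2): combined with the random signs supplied by \cite{BAYMAXMOD} and the choice $m=\lfloor\log n\rfloor$, it forces any $x\in\mon H_\infty(B_{\ell_r})$ to satisfy $\sum_{k\le n}|x_k| \le C(\log n)^\sigma$ for large $n$; for our $x$ the left-hand side grows polynomially in $n$ at rate $n^{1-1/r-\sigma'}$ (times logarithmic factors), contradicting this bound.

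For $r\ge 2$ I follow the composition reduction used in part~(2) of Theorem~\ref{thm:psigma}: precompose with the bounded diagonal operator $D_\xi:\ell_r\to\ell_2$ given by $\xi_k = k^{-1/t}(\log(k+2))^{-1/t-\veps}$ with $1/t+1/r=1/2$. If $p^{-\sigma'}u \in \mon H_\infty(B_{\ell_r})$, then composing with $D_\xi$ shows that $\tilde v_k := \xi_k (p^{-\sigma'} u)_k$ belongs to $\mon H_\infty(B_{\ell_2})$, and one computes $\tilde v_k \asymp k^{-(1/2+\sigma')}(\log k)^{-\mathrm{const}}$; since $\sigma'<1/2$ yields a polynomial exponent strictly less than $1$, the previous paragraph applied at $r=2$ again produces a contradiction. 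The main obstacle is a piece of bookkeeping: Theorem~\ref{thm:psigma}(2) is stated only at the critical exponent $1/r+\sigma$, whereas the arguments above apply to every strictly smaller exponent. Fortunately the proof there does not care about the precise exponent, since all that matters is that the polynomial growth of $\sum_{k\le n}|x_k|$ ultimately dominates the logarithmic bound $(\log n)^\sigma$.
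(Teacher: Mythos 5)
Your proof of the inclusion $p^{-\sigma}\cdot B_{\ell_r}\subset\mon H_\infty(B_{\ell_r})$ is essentially identical to the paper's: you decompose the sum dyadically according to the size of $p_\bj$, apply Theorem~\ref{unc-basis-q}(1) with $x=e^{N+1}$ on each block, and observe that the factor $e^{(N+1)\sigma}$ cancels the weight $e^{-N\sigma}$, leaving a convergent series because $\sigma>0$. That matches the paper verbatim.

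For the sharpness assertion the paper gives no explicit argument (it is implicitly deduced from Theorem~\ref{thm:psigma}(2)); your proof is correct but takes a longer route than necessary. You re-run the Kahane--Salem--Zygmund/closed-graph machinery and the $D_\xi$-composition trick from the proof of Theorem~\ref{thm:psigma}(2), correctly observing that these arguments only need polynomial growth of $\sum_{k\le n}|x_k|$ to beat the $(\log n)^\sigma$ bound. A shorter route reduces directly to Theorem~\ref{thm:psigma}(2): if $\sigma'<\sigma$ and $\beta<1/r$, set $u_n=p_n^{\sigma'}/\bigl(n^{1/r+\sigma}(\log(n+2))^\beta\bigr)$. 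The prime number theorem gives $u_n\asymp n^{\sigma'-\sigma-1/r}(\log n)^{\sigma'-\beta}$, and since $\sigma'-\sigma<0$ this lies in $\ell_r$; after normalizing, $p^{-\sigma'}u$ is a constant multiple of $\bigl(n^{-(1/r+\sigma)}(\log(n+2))^{-\beta}\bigr)_n$, and the argument proving Theorem~\ref{thm:psigma}(2) (which is insensitive to a multiplicative constant on $x$) rules this out for $\beta<1/r$ in one stroke, without splitting into the cases $r\le 2$ and $r\ge 2$. So your sharpness argument works, but you are reproving a result you could have cited more cleverly; the ``bookkeeping obstacle'' you flag at the end is a real but avoidable detour.
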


\begin{proof}
    We proceed analogously to the proof of (1a) in Theorem~\ref{thm:psigma} and obtain for $f\in H_\infty(B_{\ell_r})$ and $u \in B_{\ell_r}$ by Theorem~\ref{unc-basis-q},
    \begin{align*}
        \sum_{\bj} \frac{1}{p_\bj^\sigma} \abs{ c_\bj(f) u_\bj }
        &= \sum_{N = 0}^\infty \;
             \sum_{ \bj: e^N < q_\bj \le e^{N+1}}
                \frac{1}{p_\bj^\sigma} \abs{ c_\bj(f) u_\bj } \\
        &\le \sum_{N = 0}^\infty
             \frac{1}{e^{N\sigma}}\;
             \sum_{ \bj\in J(e^{N+1})}
                 \abs{ c_\bj(f) u_\bj } \\
        &\le \sum_{N = 0}^\infty
                \frac{e^{N\sigma}}{e^{(N-1)\sigma}}
                \exp \Big(
                    \big( -\sqrt2 \sigma + o(1) \big)
                    \sqrt{N \log N} \Big)
            \norm{f}_\infty
        < \infty
        \,.
        \qedhere
    \end{align*}
\end{proof}




\begin{remark}
  Analogously to the result \eqref{main} for $r=\infty$ and in view of Theorem \ref{thm:psigma}, a plausible conjecture would be that for all $r\geq 2$

  $$B_r \subset \mon H_\infty(B_{\ell_r}) \subset \overline{B}_r\,,$$
where for $\tfrac{1}{s} = \tfrac{1}{2} + \tfrac{1}{r}$
\begin{eqnarray*}
	B_r &=& \big\{
		u \in B_{\ell_ \infty}
	\,;\,
		\limsup_n \frac{1}{(\log n)^{\frac{r}{r+2}}} \sum_{k=1}^n \lvert u_k^\ast \rvert^s < 1
	\big\}\\
\overline{B}_r&=&\big\{
		u \in B_{\ell_\infty}
	\,;\,
		\limsup_n \frac{1}{(\log n)^{\frac r{r+2}}} \sum_{k=1}^n \lvert u_k^\ast \rvert^s \leq 1
	\big\}\,.
\end{eqnarray*}
\end{remark}

\begin{remark}
In Theorem \ref{thm:psigma}, the cases $1 \le r \leq 2$ and $2 \leq r \leq \infty$ do not really fit for  $r =  2$.
This is due to the fact that when we apply Theorem \ref{unc-basis-q} (2), we need that $\theta>1/2$. It would be nice to extend the statement of this last theorem to $\theta\in (0,1/2]$.
\end{remark}

\end{document}